\newcommand{\bch}{\bar{\mathcal{H}}}
\def\mr{{\mathbb  R}}
\newcommand{\beq}{\begin{equation}}
\newcommand{\eeq}{\end{equation}}
\newcommand{\1}{{\bf 1}}
\newcommand{\R}{\mathbb R}
\newcommand{\cac}{\mathcal C}
\newcommand{\cf}{\mathcal F}
\newcommand{\ch}{\mathcal H}
\newcommand{\ck}{\mathcal K}
\newcommand{\cs}{\mathcal S}
\newcommand{\al}{\alpha}
\newcommand{\ep}{\varepsilon}
\newcommand{\oom}{\Omega}
\newcommand{\vp}{\varphi}
\newcommand{\lp}{\left(}
\newcommand{\rp}{\right)}
\newcommand{\lcl}{\left\{}
\newcommand{\rcl}{\right\}}
\numberwithin{equation}{section}
  \theoremstyle{plain}
  \newtheorem{thm}{\protect\theoremname}[section]
   \theoremstyle{plain}
  \theoremstyle{remark}
  \newtheorem{rem}[thm]{\protect\remarkname}
  \theoremstyle{definition}
  \newtheorem*{notation*}{\protect\notationname}
  \theoremstyle{plain}
  \newtheorem{prop}[thm]{\protect\propositionname}
    \theoremstyle{plain}
  \newtheorem{lem}[thm]{\protect\lemmaname}
  \theoremstyle{plain}
  \theoremstyle{definition}
  \newtheorem{defn}[thm]{\protect\definitionname}
   \theoremstyle{plain}
   \theoremstyle{plain}
  \newtheorem*{uha*}{\protect\uhaname}
     \theoremstyle{definition}
      \newtheorem*{uell*}{\protect\uellname}
     \theoremstyle{definition}
  \providecommand{\assumptionname}{Assumption}
  \providecommand{\definitionname}{Definition}
  \providecommand{\lemmaname}{Lemma}
  \providecommand{\notationname}{Notation}
  \providecommand{\propositionname}{Proposition}
  \providecommand{\remarkname}{Remark}
\providecommand{\corollaryname}{Corollary}
\providecommand{\theoremname}{Theorem}
\providecommand{\claimname}{Claim}
\providecommand{\uhaname}{Uniform Hypoellipticity Assumption}
\providecommand{\uellname}{Uniform Ellipticity Assumption}
\providecommand{\convname}{Convention}
\begin{document}

\title{Precise Local Estimates for Differential Equations driven by Fractional Brownian Motion: Elliptic Case}

\author{Xi Geng\thanks{School of Mathematics and Statistics, University of Melbourne, Melbourne, Australia. Email: xi.geng@unimelb.edu.au.} 
\and 
Cheng Ouyang\thanks{Department of Mathematics, Statistics and Computer Science, University of Illinois at Chicago, Chicago, United States. Email: couyang@math.uic.edu.} 
\and 
Samy Tindel\thanks{Department of Mathematics, Purdue University, West Lafayette, United States. Email: stindel@purdue.edu.} 
}
\date{}

\maketitle

\begin{abstract}
This article is concerned with stochastic differential equations driven by a $d$ dimensional fractional Brownian motion with Hurst parameter $H>1/4$, understood in the rough paths sense. Whenever the coefficients of the equation satisfy a uniform ellipticity condition, we establish a sharp local estimate on the associated control distance function and a sharp local lower estimate on the density of the solution. 
\end{abstract}

\tableofcontents

\section{Introduction}
In this paper, we consider the following stochastic differential equation (SDE)
\begin{align}\label{sde-intro}
X_t=x+\sum_{i=1}^d \int_0^t V_i(X_s)dB^i_s,\quad t\in[0,1],
\end{align}
where $x \in \mathbb{R}^N$, $V_1,\cdots,V_d$ are $C^\infty$-bounded  vector fields on $\mr^N$ and $\{B_t\}_{0\leq t\leq 1}$ is an $d$-dimensional  fractional Brownian motion. We assume throughout the paper that in \eqref{sde-intro} the fractional Brownian motion has Hurst parameter $H\in (1/4, 1)$ and that the  vector fields $V_i$'s satisfy the uniform ellipticity condition. When $H\in(1/2,1)$, the above equation is understood in Young's sense \cite{Young36}; and when $H\in(1/4,1/2)$  stochastic integrals in equation~(\ref{sde-intro}) are  interpreted as rough path integrals (see, e.g., ~\cite{FV10, Gu}) which extends the Young's integral. Existence and uniqueness of solutions to the above equation can be found, for example, in \cite{LQ}. In particular, when $H=\frac{1}{2}$, this notion of solution coincides with the solution of the corresponding Stratonovitch stochastic differential equation.  

It is now well understood that under H\"{o}rmander's condition the law of the solution $X_t$ to equation \eqref{sde-intro} admits a smooth probability density $p(t,x,y)$ with respect to the Lebesgue measure on $\mathbb{R}^N$ (cf. \cite{BH07,CF10,H-P,CHLT15}).  Moreover, it is shown in \cite{BNOT16} that, under uniform ellipticity condition,  the following global upper bound holds,
\begin{align}\label{upper bound}p(t,x,y)\leq C\frac{1}{t^{NH}}\exp\left[-\frac{|x-y|^{(2H+1)\wedge 2}}{Ct^{2H}}\right].\end{align}
Clearly \eqref{upper bound} is  of Gaussian type and sharp when $H\geq1/2$; while it  only gives a sub-Gaussian bound  when $H<1/2$. Whether one should still expect a Gaussian upper bound when $H<1/2$ remains one of the major open problems in the study of the density function. Another open problem in this direction is to obtain a sharp lower bound for the density $p(t,x,y)$.

On  the other hand, the Varadhan type estimate established in \cite{BOZ15} shows that
\begin{equation}\label{eq:varadhan-estimate}
\lim_{t\rightarrow0}t^{2H}\log p(t,x,y)=-\frac{1}{2}d(x,y)^{2}.
\end{equation}
In the above, the control distance function $d(x,y)$ is given by
\begin{align}\label{def: distance}
d^2(x,y)=\inf\{ \|h\|_{\bar{\mathcal{H}}}^2; \, \Phi_1(x;h)=y\},
\end{align}
where $\bar{\mathcal{H}}$ is the Cameron-Martin space of $B$ and $\Phi_t(x;\cdot): \bar{\mathcal{H}}\to C[0,1]$ is the deterministic It\^{o} map associated to equation \eqref{sde-intro}.  Although one can not directly equate the Varadhan estimate in \eqref{eq:varadhan-estimate} to the upper bound (or a similar lower bound) in \eqref{upper bound}, it naturally motivates the following questions:
\begin{itemize}
\item[Q1.] Is the control distance $d(x,y)$ comparable to the Euclidean distance $|x-y|$\, ?

\item[Q2.] Can we use techniques developed in proving \eqref{eq:varadhan-estimate} to obtain some information on the bounds of $p(t,x,y)$\,? [Here we are in particular interested in a lower bound, since progress on the lower bound of the density is limited in the literature.]
\end{itemize}

Our investigation in the present article shows an effort in answering the above two questions, at least partially. More specifically, our discovery is reported in the following two theorems.

\begin{thm}\label{thm: local comparison}
Let $d$ be the control distance given in \eqref{def: distance}. Under uniform ellipticity conditions 
{(see the forthcoming equation~\eqref{eq:hyp-elliptic} for a more explicit version),} 
there exist constants $C,\delta>0$, such that 
\begin{equation}\label{eq:local-comparison}
\frac{1}{C}|x-y|\leq d(x,y)\leq C|x-y| \, ,
\end{equation}
for all $x,y\in \mathbb{R}^N$ with $|x-y|<\delta$.
\end{thm}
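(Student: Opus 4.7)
The plan is to prove the two inequalities in \eqref{eq:local-comparison} separately, both by exploiting the first-order behavior of the deterministic It\^o map $h\mapsto\Phi_1(x;h)$ at $h=0$. Expanding in $h$ around $h=0$ (using $\Phi_t(x;0)\equiv x$) gives, at least formally,
\[
\Phi_1(x;h)\;=\;x+\sum_{i=1}^{d}V_i(x)\,h_1^i+R(x,h),
\]
where $R(x,h)$ is quadratic in $h$ in a suitable $\bar{\mathcal{H}}$-topology. This expansion is justified because the Cameron--Martin paths of fBm with $H>0$ have finite $q$-variation for some $q<2$, so the equation is of Young type and $h\mapsto \Phi_\cdot(x;h)$ is smooth in $h$. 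The proof will combine three ingredients: (i) uniform ellipticity of the matrix $V(x)=[V_1(x)|\cdots|V_d(x)]$, which yields a uniformly bounded right-inverse $a(x,\cdot):\mathbb{R}^N\to\mathbb{R}^d$ with $V(x)a(x,v)=v$ and $|a(x,v)|\leq C|v|$; (ii) a bounded linear right-inverse $\iota:\mathbb{R}^d\to\bar{\mathcal{H}}$ of the endpoint map $h\mapsto h_1$, a standard feature of the CM space of fBm; and (iii) quantitative Lipschitz/$C^1$ estimates for $\Phi_1(x;\cdot)$ near $h=0$ in CM topology.

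\textbf{Upper bound $d(x,y)\leq C|x-y|$.} Given $y$ close to $x$, take as first guess $h^{(0)}:=\iota(a(x,y-x))$, which satisfies $\|h^{(0)}\|_{\bar{\mathcal{H}}}\leq C|x-y|$ and, by the first-order expansion, $\Phi_1(x;h^{(0)})=y+R_0$ with $|R_0|\lesssim|x-y|^2$. Correct iteratively by $h^{(n+1)}:=h^{(n)}-\iota\!\left(a(x,\Phi_1(x;h^{(n)})-y)\right)$. Thanks to the quadratic estimate on $R$ together with the uniform operator bounds on $a(x,\cdot)$ and $\iota$, this defines a contraction on a $\bar{\mathcal{H}}$-ball of radius $\lesssim|x-y|$, provided $|x-y|<\delta$ for some $\delta>0$. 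Its fixed point $h$ satisfies $\Phi_1(x;h)=y$ and $\|h\|_{\bar{\mathcal{H}}}\leq C|x-y|$, giving the upper bound by definition of $d$.

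\textbf{Lower bound $d(x,y)\geq \frac{1}{C}|x-y|$.} I would prove the Lipschitz-at-zero estimate
\[
|\Phi_1(x;h)-x|\;\leq\; C\,\|h\|_{\bar{\mathcal{H}}}\qquad\text{for }\|h\|_{\bar{\mathcal{H}}}\text{ small,}
\]
uniformly in $x$. Classical Young ODE estimates give $\|\Phi_\cdot(x;h)-x\|_{\infty}\leq C\|h\|_{q\text{-var}}$, and the continuous embedding $\bar{\mathcal{H}}\hookrightarrow C^{q\text{-var}}([0,1];\mathbb{R}^d)$ for some $q<2$ (arising from the Volterra/fractional-integral representation of CM elements of fBm) promotes this to the displayed CM-Lipschitz bound. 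If $\Phi_1(x;h)=y$ then $|x-y|\leq C\|h\|_{\bar{\mathcal{H}}}$; taking the infimum over such $h$ yields the lower bound.

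\textbf{Main obstacle.} The technical heart is making the contraction argument uniform in $x$: one needs quantitative control, uniformly in $x$, not only of the derivative $D\Phi_1(x;0)$ (automatic from ellipticity) but of a Lipschitz bound on $h\mapsto D\Phi_1(x;h)$, i.e.\ a $C^2$-flavor estimate for the It\^o map in CM topology. This reduces to Young/rough ODE stability combined with the embedding $\bar{\mathcal{H}}\hookrightarrow C^{q\text{-var}}$ with $q<2$, and it is here that the fBm-specific input (the explicit Volterra representation of CM elements, especially when $H<1/2$) enters in an essential way.
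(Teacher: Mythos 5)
Your lower bound argument coincides with the paper's: take any $h\in\Pi_{x,y}$ with $\|h\|_{\bar{\mathcal{H}}}$ a priori bounded, use the embedding $\bar{\mathcal{H}}\hookrightarrow C^{q\text{-}\mathrm{var}}$ (Proposition~\ref{prop: variational embedding}) together with a Young/RDE stability estimate to get $|y-x|\leq C\bigl(\|h\|_{\bar{\mathcal{H}}}\vee\|h\|_{\bar{\mathcal{H}}}^{q}\bigr)$, then absorb the $q$-th power using the a priori bound. One small point worth making explicit: the reduction to small $\|h\|_{\bar{\mathcal{H}}}$ is \emph{not} automatic --- the paper obtains it by first proving the upper bound, which gives $d(x,y)\leq C_{2}$ for $|x-y|\leq1$, and then restricting to $h$ with $\|h\|_{\bar{\mathcal{H}}}\leq2d(x,y)$. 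You should state this ordering, since otherwise the lower bound argument sees arbitrarily large $h\in\Pi_{x,y}$.

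Your upper bound takes a genuinely different route. The paper's Lemma~\ref{lem: explicit construction of joining h} writes down an exact connecting control $h_t=\int_0^t V^*(z_s)(V(z_s)V^*(z_s))^{-1}(y-x)\,ds$ built from the straight line $z_t=(1-t)x+ty$; this $h$ is Lipschitz by construction, and the work is then to estimate $\|h\|_{\bar{\mathcal{H}}}$. For $H\leq1/2$ this is immediate from the continuous embedding $W_0^{1,2}\hookrightarrow\bar{\mathcal{H}}$ (Lemma~\ref{lem: continuous embedding when H<1/2}); for $H>1/2$, where $W_0^{1,2}\not\subseteq\bar{\mathcal{H}}$, the paper must compute $K^{-1}h$ explicitly via fractional derivatives and bound it in $L^2$. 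You instead run a modified Newton iteration on the endpoint map, seeded by $\iota(a(x,y-x))$ where $\iota$ is a norm-one right inverse of $h\mapsto h_1$ in $\bar{\mathcal{H}}$ (which does exist: $h\mapsto h_1$ is a bounded functional on $\bar{\mathcal{H}}$ with $|h_1|\leq\|h\|_{\bar{\mathcal{H}}}$, so Riesz representation gives $\iota$). This buys you a treatment uniform in $H$ that sidesteps the $H>1/2$ fractional-calculus estimate entirely, since all iterates live in $\bar{\mathcal{H}}$ by construction. What it costs is precisely the ingredient you flag but do not supply: a quantitative, $x$-uniform Lipschitz bound on $h\mapsto D\Phi_1(x;h)$ near $h=0$ in $\bar{\mathcal{H}}$-operator norm, needed to make the iteration a contraction on a ball of radius $\sim|x-y|$. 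This is believable given $C_b^\infty$ vector fields and the embedding $\bar{\mathcal{H}}\hookrightarrow C^{q\text{-}\mathrm{var}}$ with $q<2$, but it is not a one-liner --- it is the real content of your upper bound, and it is exactly the work the paper's explicit construction is designed to avoid. As written, the proposal is an outline with the key lemma asserted rather than proved.
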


\begin{rem}
Theorem \eqref{thm: local comparison} reflects our attempt in  answering Q1. The control distance $d(x,y)$ plays an important role in various analytic properties of $X$ in equation \eqref{sde-intro}, for example, the large deviations of $X_t$. Due to the complexity of the Cameron-Martin structure of $B$, the control distance $d(x,y)$ is far from being a metric (for example, it is not clear whether it satisfies the triangle inequality) and its shape is not clear. Our investigation shows that $d(x,y)$, as a function, is locally comparable to the Euclidean distance. The authors believe that a global equivalence would not hold in general.
\end{rem}

Our second result concerns Q2 above and aims at obtaining a lower bound of the density function. 
{It is phrased below in a slightly informal way, and we refer to Theorem~\ref{thm: local lower estimate in elliptic case} for a complete statement.}
\begin{thm}\label{thm: local lower estimate}
Let $p(t,x,y)$ be the probability density of $X_{t}$. Under uniform ellipticity conditions on the vector fields in $V$,
there exist some constants $C,\tau>0$ such that 
\begin{equation}\label{eq:local-density-bound}
p(t,x,y)\geq\frac{C}{t^{NH}},
\end{equation}
for all $(t,x,y)\in(0,1]\times\mathbb{R}^N\times\mathbb{R}^N$ with 
$|x-y|\leq t^H,\text{and}
\ t<\tau .$
\end{thm}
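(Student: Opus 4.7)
The plan is to exploit the self-similarity of fractional Brownian motion to reduce the estimate to a uniform lower bound for the density of a rescaled process at time one, and then to compare this density with an explicit Gaussian limit via Malliavin calculus. For fixed $t\in(0,1]$ and $x\in\mr^N$, introduce the rescaled process $\tilde X^{t,x}_s := t^{-H}(X_{ts}-x)$ for $s\in\ou$. Using the self-similarity $\{B_{ts}\}_{s\in\ou} \stackrel{d}{=} \{t^H \tilde B_s\}_{s\in\ou}$ for another $d$-dimensional fBm $\tilde B$, one checks that $\tilde X^{t,x}$ solves the rough equation
\[
\tilde X^{t,x}_s \, = \, \sum_{i=1}^d \int_0^s V_i^{t,x}\bigl(\tilde X^{t,x}_u\bigr)\, d\tilde B^i_u,
\qquad V_i^{t,x}(y):=V_i(x+t^H y).
\]
Writing $q^{t,x}$ for the density of $\tilde X^{t,x}_1$, a change of variables gives $p(t,x,y) = t^{-NH}\, q^{t,x}(t^{-H}(y-x))$, so the theorem reduces to producing a uniform lower bound $q^{t,x}(z)\geq c>0$ for every $|z|\leq 1$, every $x\in\mr^N$, and every sufficiently small $t$.

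The natural candidate limit as $t\to 0$ is the Gaussian random variable $G(x):=\sum_{i=1}^d V_i(x)\tilde B^i_1$, since $V_i^{t,x}(y)=V_i(x)+O(t^H)$ uniformly on compacts. Under the uniform ellipticity assumption, the covariance matrix $\sum_i V_i(x)V_i(x)^T$ is bounded below uniformly in $x$, so $G(x)$ admits a density $g_x$ satisfying $g_x(z)\geq 2c_0>0$ for every $|z|\leq 1$ and every $x\in\mr^N$. The bulk of the work is then to prove
\[
\sup_{|z|\leq 1,\, x\in\mr^N}\bigl|q^{t,x}(z)-g_x(z)\bigr| \longrightarrow 0 \quad \text{as}\ t\to 0 ,
\]
which together with the previous observation yields $q^{t,x}(z)\geq c_0$ for $t$ below some threshold $\tau$. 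This comparison I would establish by following the scheme used for the upper bound \eqref{upper bound} in \cite{BNOT16}: first, uniform-in-$(t,x)$ estimates for every Malliavin--Sobolev norm of $\tilde X^{t,x}_1$, which follow from the $C^\infty$-boundedness of the rescaled vector fields $V_i^{t,x}$; second, a uniform-in-$(t,x)$ lower bound on the determinant of the Malliavin covariance matrix of $\tilde X^{t,x}_1$; and third, the convergence $\tilde X^{t,x}_1 \to G(x)$ in every Malliavin--Sobolev norm, obtained by Taylor expanding $V_i^{t,x}$ around $y=0$. Feeding these ingredients into the standard integration-by-parts representation
\[
q^{t,x}(z) \, = \, \E\bigl[1_{\{\tilde X^{t,x}_1 \geq z\}}\cdot H_{(1,\ldots,N)}\bigl(\tilde X^{t,x}_1,1\bigr)\bigr],
\]
and similarly for $g_x$, produces uniform convergence of the densities on the compact set $\{|z|\leq 1\}$.

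The hard part is the second item above: obtaining a lower bound on the inverse Malliavin matrix of the rescaled process with constants that do \emph{not} degenerate as $t\to 0$. A naive transcription of the nondegeneracy arguments available for a fixed SDE produces quantitative bounds that deteriorate with $t$ through the $L^p$-norms of the Jacobian and its inverse, and one has to show that the identity $V_i^{t,x}(0)=V_i(x)$ combined with uniform ellipticity at the point $x$ forces the required $t$-uniformity. Once this nondegeneracy step is established, the remaining steps amount to a Kusuoka--Stroock-type density comparison, adapted to the fractional rough-path framework developed in the references \cite{CF10,H-P,CHLT15} cited in the introduction; note also that the local comparison \eqref{eq:local-comparison} from Theorem~\ref{thm: local comparison} is consistent with the condition $|x-y|\leq t^H$ but is not used directly in this lower-bound argument.
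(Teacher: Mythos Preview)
Your approach is correct and genuinely different from the paper's. Both proofs begin with the self-similarity $\Phi_t(x;B)\stackrel{\mathrm{law}}{=}\Phi_1(x;\varepsilon B)$ with $\varepsilon=t^H$, but diverge from there. The paper applies a Cameron--Martin (Girsanov) shift by an element $h\in\Pi_{x,y}$ with $\|h\|_{\bar{\mathcal H}}\le 2\varepsilon$, producing the representation $p(t,x,y)\ge C\varepsilon^{-N}\,\mathbb{E}[\delta_0(X^\varepsilon(h))\,e^{-I(h/\varepsilon)}]$, and then compares $X^\varepsilon(h)$ to a Gaussian limit $X(h)$ whose covariance is the \emph{deterministic} Malliavin matrix $\Gamma_{\Phi_1(x;h)}$ along the joining path. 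The existence of such an $h$ with small norm is exactly the upper bound in the distance comparison Theorem~\ref{thm: local comparison}, so that theorem enters the paper's argument in an essential way. Your route instead rescales the SDE itself, obtaining vector fields $V_i^{t,x}(y)=V_i(x+t^Hy)$, and compares directly to the frozen-coefficient Gaussian $G(x)=V(x)\tilde B_1$; no Girsanov weight $e^{-I(h/\varepsilon)}$ appears, and Theorem~\ref{thm: local comparison} is never invoked. The trade-off is that the paper's Girsanov framework, with its explicit dependence on $h\in\Pi_{x,y}$ and on $\Gamma_{\Phi_1(x;h)}$, is closer in spirit to the Varadhan estimate~\eqref{eq:varadhan-estimate} and is the natural scaffolding for the hypoelliptic companion paper, whereas your frozen-coefficient comparison is more elementary but tied firmly to ellipticity (the limit covariance $V(x)V(x)^*$ is degenerate otherwise).

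One comment on what you flag as the ``hard part'': the uniform-in-$t$ nondegeneracy of the Malliavin matrix of $\tilde X^{t,x}_1$ is in fact easier than you suggest, not harder. The rescaled fields satisfy $\partial^k V_i^{t,x}=t^{kH}\,(\partial^k V_i)(x+t^H\cdot)$, so all derivatives shrink as $t\to0$ and the Jacobian $\tilde J$ and its inverse converge to the identity in every $L^p$; meanwhile $V^{t,x}(V^{t,x})^*=VV^*(x+t^H\cdot)\ge\Lambda_1 I$ pointwise by the \emph{global} uniform ellipticity (not merely ellipticity at the single point $x$). Feeding this into the standard lower bound $z^*\gamma z\ge C\int_0^1 z^*\tilde J_1\tilde J_r^{-1}VV^*(\tilde J_r^{-1})^*\tilde J_1^*z\,dr$ (for $H\le 1/2$; the interpolation inequality for $H>1/2$) gives negative moments of $\det\gamma$ that are uniform in $(t,x)$ without any delicate argument. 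So the concern you raise about constants deteriorating with $t$ does not materialize here.
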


\begin{rem}
Relation \eqref{eq:local-density-bound} presents a local  lower bound, both in time and space, for the density function $p_t(x,y)$ .  It is clearly  sharp by a quick examination of the case when $X_t$ is an $N$-dimensional fractional Brownian motion, i.e. when $N=d$ and $V=\mathrm{Id}$.
\end{rem}

{In order to summarize the methodology we have followed for Theorem~\ref{thm: local comparison} and~\ref{thm: local lower estimate}, we should highlight two main ingredients:\\
(i) Some thorough analytic estimates concerning the Cameron-Martin space related to fractional Brownian motions, which are mostly be useful in order to get proper estimates on the distance $d$ defined by~\eqref{def: distance}.\\ 
(ii) A heavy use of Malliavin calculus, Girsanov's theorem in a fBm context and large deviations techniques are invoked for our local lower bound \eqref{eq:local-density-bound}.\\
Our analysis relies thus heavily on the particular fBm setting. Generalizations to a broader class of Gaussian processes seem to be nontrivial and are left for a subsequent publication.
}

\begin{rem}
As one will see below, our argument for both Theorem \ref{thm: local comparison} and \ref{thm: local lower estimate}   hinges crucially on uniform ellipticity of the vector fields. The hypoelliptic case is substantially harder and requires a completely different approach, which will be studied in a companion paper~\cite{GOT20-hypo}.\end{rem}

\begin{rem}
{For sake of clarity and conciseness, we have restricted most of our analysis to equation \eqref{sde-intro}, that is an equation with no drift. However, we shall give some hints at the end of the paper about how to extend our results to more general contexts.}
\end{rem}

\noindent
\textbf{Organization of the present paper.} In Section \ref{sec:prelim}, we present some basic notions from the analysis of fractional Brownian motion. In particular, we provide substantial detail on the Cameron-Martin space of a fractional Brownian motion. This is needed in order to establish the comparison between control distance and the Euclidean distance and  will also be helpful for later references.  Our main results Theorem \ref{thm: local comparison} and \ref{thm: local lower estimate} will then be proved in Section \ref{sec: ellip.}.


\section{Preliminary results.}\label{sec:prelim}

This section is devoted to some preliminary results on the Cameron-Martin space related to a fractional Brownian motion. We shall also recall some basic facts about rough paths solutions to noisy equations.

\subsection{The Cameron-Martin subspace of fractional Brownian motion.}\label{sec: prel.}

Let us start by recalling the definition of fractional Brownian motion.

\begin{defn}\label{def:fbm}
A $d$-dimensional \textit{fractional Brownian motion} with Hurst parameter $H\in(0,1)$ is an $\mathbb{R}^d$-valued continuous centered Gaussian process $B_t=(B_t^1,\ldots,B_t^d)$ whose covariance structure is given by 
\beq\label{eq:cov-fBm}
\mathbb{E}[B_{s}^{i}B_{t}^{j}]=\frac{1}{2}\left(s^{2H}+t^{2H}-|s-t|^{2H}\right)\delta_{ij}
\triangleq R(s,t) \delta_{ij}.
\eeq
\end{defn}

This process is defined and analyzed in numerous articles (cf. \cite{DU97,Nualart06,PT00} for instance), to which we refer for further details.
In this section, we mostly focus on a proper definition of the Cameron-Martin subspace related to $B$. We also prove two general lemmas about this space which are needed for our analysis of the density $p(t,x,y)$. 
Notice that we will frequently identify a Hilbert space with its dual in the canonical way without  further mentioning. 

In order to introduce the Hilbert spaces which will feature in the sequel, consider a one dimensional fractional Brownian motion $\{B_t:0\leq t\leq 1\}$ with Hurst parameter $H\in(0,1)$.   The discussion here can be easily adapted to the multidimensional setting with arbitrary time horizon $[0,T]$. 
Denote $W$ as the space of continuous paths $w:[0,1]\rightarrow\mathbb{R}^{1}$
with $w_{0}=0.$ Let $\mathbb{P}$ be the probability measure over $W$ under which the coordinate process $B_t(w)=w_t$ becomes a fractional Brownian motion.  Let ${\cal C}_{1}$ be the associated first order Wiener chaos,
i.e. ${\cal C}_{1}\triangleq\overline{\mathrm{Span}\{B_{t}:0\leq t\leq1\}}\ {\rm in}\ L^{2}(W,\mathbb{P})$.

\begin{defn}\label{def:bar-H}
Let $B$ be a one dimensional fractional Brownian motion as defined in \eqref{def:fbm}.
Define $\bar{{\cal H}}$ to be the space of elements $h\in W$ which
can be written as 
\beq\label{eq:def-h-in-CM}
h_{t}=\mathbb{E}[B_{t}Z],\ \ \ 0\leq t\leq1,
\eeq
where $Z\in{\cal C}_{1}.$ We equip $\bar{{\cal H}}$ with an inner product structure given by
\[
\langle h_{1},h_{2}\rangle_{\bar{{\cal H}}}\triangleq\mathbb{E}[Z_{1}Z_{2}],\ \ \ h_{1},h_{2}\in\bar{{\cal H}},
\]
whenever $h^{1},h^{2}$ are defined by \eqref{eq:def-h-in-CM} for two random variables $Z_{1},Z_{2}\in{\cal C}_{1}$.
The Hilbert space $(\bar{\mathcal{H}},\langle\cdot,\cdot\rangle_{\bar{\mathcal{H}}})$ is called the \textit{Cameron-Martin
subspace} of the fractional Brownian motion. 
\end{defn}

One of the advantages of working with fractional Brownian motion is that a convenient analytic description of $\bar{\mathcal{H}}$ in terms of fractional calculus is available (cf. \cite{DU97}).  Namely recall that given a function $f$ defined on $[a,b]$, the right and left \textit{fractional integrals} of $f$ of order $\alpha>0$ are respectively defined by
\beq\label{eq:def-frac-integral}
(I_{a^{+}}^{\alpha}f)(t)\triangleq\frac{1}{\Gamma(\alpha)}\int_{a}^{t}f(s)(t-s)^{\alpha-1}ds,
\quad\text{and}\quad 
(I_{b^{-}}^{\alpha}f)(t)\triangleq\frac{1}{\Gamma(\alpha)}\int_{t}^{b}f(s)(s-t)^{\alpha-1}ds.
\eeq
In the same way the right and left \textit{fractional derivatives} of $f$ of order $\alpha>0$ are respectively defined by  
\beq\label{eq:def-frac-deriv}
(D_{a^{+}}^{\alpha}f)(t)\triangleq\left(\frac{d}{dt}\right)^{[\alpha]+1}(I_{a^{+}}^{1-\{\alpha\}}f)(t),
\quad\text{and}\quad 
(D_{b^{-}}^{\alpha}f)(t)\triangleq\left(-\frac{d}{dt}\right)^{[\alpha]+1}(I_{b^{-}}^{1-\{\alpha\}}f)(t),
\eeq
where $[\alpha]$ is the integer part of $\alpha$ and $\{\alpha\}\triangleq\alpha-[\alpha]$ is the fractional part of $\alpha$. The following formula for $D_{a^+}^\alpha$, valid for $\al\in(0,1)$, will be useful for us:
\begin{equation}\label{eq: formula for fractional derivatives}
(D_{a^{+}}^{\alpha}f)(t)=\frac{1}{\Gamma(1-\alpha)}\left(\frac{f(t)}{(t-a)^{\alpha}}+\alpha\int_{a}^{t}\frac{f(t)-f(s)}{(t-s)^{\alpha+1}}ds\right),\ \ \ t\in[a,b].
\end{equation}
The fractional integral and derivative operators are inverse to each other. For this and other properties of fractional derivatives, the reader is referred to \cite{KMS93}.

Let us now go back to the construction of the Cameron-Martin space for $B$, and proceed as in \cite{DU97}. Namely define an isomorphism $K$ between $L^{2}([0,1])$
and $I_{0+}^{H+1/2}(L^{2}([0,1]))$ in the following way:
\begin{equation}\label{eq: analytic expression of K}
K\varphi\triangleq\begin{cases}
C_{H}\cdot I_{0^{+}}^{1}\left(t^{H-\frac{1}{2}}\cdot I_{0^{+}}^{H-\frac{1}{2}}\left(s^{\frac{1}{2}-H}\varphi(s)\right)(t)\right), & H>\frac{1}{2};\\
C_{H}\cdot I_{0^{+}}^{2H}\left(t^{\frac{1}{2}-H}\cdot I_{0^{+}}^{\frac{1}{2}-H}\left(s^{H-\frac{1}{2}}\varphi(s)\right)(t)\right), & H\leq\frac{1}{2},
\end{cases}
\end{equation}
where $c_{H}$ is a universal constant depending only on $H.$ One
can easily compute $K^{-1}$ from the definition of $K$ in terms
of fractional derivatives. Moreover, the operator $K$ admits a kernel representation,
i.e. there exits a function $K(t,s)$ such that 
\[
(K\varphi)(t)=\int_{0}^{t}K(t,s)\varphi(s)ds,\ \ \ \varphi\in L^{2}([0,1]).
\]
The kernel $K(t,s)$ is defined for $s<t$ (taking zero value
otherwise). One can write down $K(t,s)$ explicitly thanks to the definitions \eqref{eq:def-frac-integral} and \eqref{eq:def-frac-deriv}, but this expression is not included here since it will not be used later in  our analysis. A crucial property for $K(t,s)$ is that 
\begin{equation}
R(t,s)=\int_{0}^{t\wedge s}K(t,r)K(s,r)dr,\label{eq: kernel representation of covariance function for fBM}
\end{equation} 
where $R(t,s)$ is the fractional Brownian motion covariance function introduced in \eqref{eq:cov-fBm}. This essential fact enables the following analytic characterization of the Cameron-Martin space in \cite[Theorem~3.1]{DU97}.

\begin{thm}\label{thm:bar-cal-H-as frac-integral}
Let $\bch$ be the space given in Definition \ref{def:bar-H}.
As a vector space we have $\bar{{\cal H}}=I_{0^{+}}^{H+1/2}(L^{2}([0,1])),$ and 
the Cameron-Martin norm is given by 
\begin{equation}
\|h\|_{\bar{{\cal H}}}=\|K^{-1}h\|_{L^{2}([0,1])}.\label{eq: inner product in terms of fractional integrals}
\end{equation}
\end{thm}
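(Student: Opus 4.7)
The plan is to identify $\bar{\mathcal{H}}$ with the image $K(L^{2}([0,1]))$ via a natural Wiener-type isometry, and then to verify analytically that $K(L^{2}([0,1]))=I_{0^{+}}^{H+1/2}(L^{2}([0,1]))$ by reading off the explicit formula~\eqref{eq: analytic expression of K}. Throughout, the kernel identity~\eqref{eq: kernel representation of covariance function for fBM} will be the bridge between probability and analysis.

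\textbf{Step 1: Wiener representation of $B$.} Because $R(t,s)=\langle K(t,\cdot),K(s,\cdot)\rangle_{L^{2}([0,1])}$, one can construct (on an enlarged probability space if necessary) a standard Brownian motion $W$ such that
\[
B_{t}=\iot K(t,s)\,dW_{s},\qquad 0\le t\le 1,
\]
and such that the Gaussian subspaces generated by $\{B_{t}:t\in[0,1]\}$ and by $\{\iou\psi(s)\,dW_{s}:\psi\in L^{2}([0,1])\}$ coincide. In particular $\mathcal{C}_{1}$ is isometric to $L^{2}([0,1])$ via $\varphi\mapsto Z_{\varphi}:=\iou\varphi(s)\,dW_{s}$.

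\textbf{Step 2: Identification of $\bch$ with $K(L^{2})$.} For any $\varphi\in L^{2}([0,1])$, set $h^{\varphi}_{t}:=\mathbb{E}[B_{t}Z_{\varphi}]$. By It\^o's isometry and the kernel representation of Step~1,
\[
h^{\varphi}_{t}=\mathbb{E}\Bigl[\iot K(t,s)\,dW_{s}\cdot\iou\varphi(u)\,dW_{u}\Bigr]=\iot K(t,s)\varphi(s)\,ds=(K\varphi)(t),
\]
and $\|h^{\varphi}\|_{\bch}^{2}=\mathbb{E}[Z_{\varphi}^{2}]=\|\varphi\|_{L^{2}([0,1])}^{2}$. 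Conversely every $h\in\bch$ is of the form $h_{t}=\mathbb{E}[B_{t}Z]$ for some $Z\in\mathcal{C}_{1}$; writing $Z=Z_{\varphi}$ yields $h=K\varphi$. Thus $\varphi\mapsto K\varphi$ is an isometric isomorphism $L^{2}([0,1])\to\bch$, which already gives~\eqref{eq: inner product in terms of fractional integrals} upon inverting $K$.

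\textbf{Step 3: Analytic identification $K(L^{2})=I_{0^{+}}^{H+1/2}(L^{2})$.} Starting from~\eqref{eq: analytic expression of K}, the goal is to show that the range of $K$ coincides, as a vector space, with $I_{0^{+}}^{H+1/2}(L^{2}([0,1]))$. For $H>1/2$, using the semigroup property $I_{0^{+}}^{\alpha}\circ I_{0^{+}}^{\beta}=I_{0^{+}}^{\alpha+\beta}$ and the fact that multiplication by $t^{H-1/2}$ sends $I_{0^+}^{H-1/2}(L^{2})$ into the right function class, one gets the inclusion $K(L^{2})\subset I_{0^{+}}^{H+1/2}(L^{2})$; the reverse inclusion is obtained by explicitly inverting the three building blocks of~\eqref{eq: analytic expression of K} via the corresponding fractional derivatives in~\eqref{eq:def-frac-deriv}. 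For $H\le1/2$, one argues in the same manner with the factorization $I_{0^{+}}^{2H}\circ(t^{1/2-H}\cdot)\circ I_{0^{+}}^{1/2-H}\circ(s^{H-1/2}\cdot)$, again using the semigroup rule and invertibility of the weighted fractional integrals on $L^{2}$. Combining this with Step~2 yields both $\bch=I_{0^{+}}^{H+1/2}(L^{2}([0,1]))$ and the norm formula.

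\textbf{Main obstacle.} Steps~1 and~2 are essentially formal consequences of It\^o's isometry and~\eqref{eq: kernel representation of covariance function for fBM}. The real work lies in Step~3, and especially in the regime $H\le1/2$: the operator $I_{0^{+}}^{1/2-H}$ is not smoothing, and one must carefully track how the singular weights $t^{H-1/2}$ and $t^{1/2-H}$ interact with the fractional integration operators in $L^{2}([0,1])$ in order to produce a clean image equal to $I_{0^{+}}^{H+1/2}(L^{2})$. Once the composition identities are established in both regimes, the statement of the theorem follows from Steps~1--2 without further effort.
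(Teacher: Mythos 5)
The paper does not actually prove Theorem~\ref{thm:bar-cal-H-as frac-integral}: it is stated and cited directly as \cite[Theorem~3.1]{DU97}. So there is no in-paper proof to compare your proposal against; I can only assess the proposal on its own merits, and there it has a real gap.

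The problem is the transition from Step~1 to Step~2. You assert in Step~1 that one may choose $W$ so that the Gaussian subspace generated by $\{B_t\}$ coincides with the full first chaos of $W$, and in Step~2 you then compute $\|h^{\varphi}\|_{\bch}^{2}=\mathbb{E}[Z_{\varphi}^{2}]=\|\varphi\|_{L^{2}}^{2}$ for an arbitrary $\varphi\in L^{2}([0,1])$. By Definition~\ref{def:bar-H}, the Cameron--Martin norm of $h$ is $\|Z\|_{L^2(\Omega)}$ for the representing $Z\in\mathcal{C}_1$; this equals $\|\varphi\|_{L^2}$ only when $Z_\varphi\in\mathcal{C}_1$, i.e.\ only when $\varphi\in\overline{\mathrm{span}\{K(t,\cdot):t\in[0,1]\}}$. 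If that closed span were a proper subspace of $L^2$, any nonzero $\varphi_0$ orthogonal to it would give $(K\varphi_0)(t)=\langle K(t,\cdot),\varphi_0\rangle_{L^2}=0$ for every $t$, so $K$ would fail to be injective on $L^2$, $\varphi\mapsto K\varphi$ would not be an isometry, and the formula $\|h\|_{\bch}=\|K^{-1}h\|_{L^2}$ would not even be well-posed. The totality of $\{K(t,\cdot)\}$ in $L^2([0,1])$ is precisely the content of the paper's Lemma~\ref{lem: surjectivity of K*} (surjectivity of $\mathcal{K}^*$), and it is not free: for $H<1/2$ the paper needs an explicit computation with $f_\beta(t)=t^{1/2-H}(1-t)^{\beta+1/2-H}$ together with Bernstein's approximation theorem. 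You have folded this nontrivial fact into Step~1 under the words ``one can construct $W$ such that the Gaussian subspaces coincide''; it must be proved, or at least flagged as a required lemma. (Equivalently, injectivity of $K$ on $L^2$ can be read off from the factorization in~\eqref{eq: analytic expression of K}, since $I_{0^+}^{\alpha}$ and multiplication by a power weight are each injective — but then this observation belongs logically \emph{before} Step~2, and it is part of the Step~3 analysis you postpone.)

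Beyond this, Step~3 — the analytic identity $K(L^{2}([0,1]))=I_{0^{+}}^{H+1/2}(L^{2}([0,1]))$, which is where the weighted fractional operators must be handled carefully, especially for $H\le1/2$ — is left as a plan rather than executed; you yourself acknowledge it as the main obstacle. As written the proposal is therefore an outline, not a proof, and the outline itself needs the missing totality/injectivity input before Step~2 can stand.
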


In order to define Wiener integrals with respect to $B$, it is also convenient to look at the Cameron-Martin subspace in terms of the covariance structure. Specifically, we define another space $\mathcal{H}$ as the completion of the space of simple step functions with inner product induced by 
\begin{equation}\label{eq:def-space-H}
\langle\mathbf{1}_{[0,s]},\mathbf{1}_{[0,t]}\rangle_{{\cal H}}\triangleq R(s,t).
\end{equation}
The space $\ch$ is easily related to $\bch$. Namely define the following operator 
\begin{equation}\label{eq:def-K-star}
{\cal K}^{*}:{\cal H}\rightarrow L^{2}([0,1]),
\quad\text{such that}\quad
\mathbf{1}_{[0,t]}\mapsto K(t,\cdot).
\end{equation}
We also set 
\begin{equation}\label{eq:IsoR}
\mathcal{R}\triangleq K\circ\mathcal{K}^{*}:{\cal H}\rightarrow\bar{\mathcal{H}},
\end{equation}where the operator $K$ is introduced in \eqref{eq: analytic expression of K}. Then it can be proved that $\mathcal{R}$ is an isometric isomorphism (cf. Lemma \ref{lem: surjectivity of K*} below for the surjectivity of $\mathcal{K}^*$). In addition, under this identification, $\mathcal{K}^*$ is the adjoint of $K$, i.e. $\mathcal{K}^{*}=K^{*}\circ\mathcal{R}.$ This can be seen by acting on indicator functions and then taking limits. 
As mentioned above, one advantage about the space $\mathcal{H}$ is that the fractional Wiener integral operator $I:{\mathcal{H}}\rightarrow{\mathcal{C}}_{1}$ induced by  
${\bf 1}_{[0,t]}\mapsto B_{t}$ is an isometric isomorphism. According to relation~\eqref{eq: kernel representation of covariance function for fBM},
$B_{t}$ admits a Wiener integral representation with respect to an underlying Wiener process $W$:
\beq\label{eq:B-as-Wiener-integral}
B_{t}=\int_{0}^{t}K(t,s)dW_{s}  .
\eeq
Moreover, the process $W$ in \eqref{eq:B-as-Wiener-integral} can be expressed as a Wiener integral with respect to $B$, that is
$W_{s}=I(({\cal K}^{*})^{-1}{\bf 1}_{[0,s]})$  (cf. \cite[relation (5.15)]{Nualart06}).

Let us also mention the following useful formula for the natural pairing between $\mathcal{H}$ and $\bar{\mathcal{H}}$. Denote by $C^{H^-}([0,1]; \R^d)$ the space of $\alpha$-H\"{o}lder continuous path for all $\alpha<H$.

\begin{lem}

Let $\ch$ be the space defined as the completion of the indicator functions with respect to the inner product \eqref{eq:def-space-H}. Also recall that $\bch$ is introduced in Definition \ref{def:bar-H}. Then through the isometric isomorphism $\mathcal{R}$ defined by (\ref{eq:IsoR}), the natural pairing between $\ch$ and $\bch$ is given by
\begin{equation}\label{eq: H-barH pairing}
_{\mathcal{H}}\langle f,h\rangle_{\bar{\mathcal{H}}}=\int_{0}^{1}f_{s} \, dh_{s},
\end{equation}
for all $f\in C^{H^-}([0,1];\R^d)$. In the above, the integral on the right-hand side is understood in Young's sense, thanks to Proposition \ref{prop: variational embedding} below.
\end{lem}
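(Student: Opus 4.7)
The natural strategy is a density argument: first verify the identity on indicator functions, extend by linearity to step functions, and then pass to the limit using approximation in $\mathcal{H}$ on one side and continuity of the Young integral on the other. The pairing $_{\mathcal{H}}\langle f,h\rangle_{\bar{\mathcal{H}}}$ via the isomorphism $\mathcal{R}$ equals $\langle \mathcal{R}f,h\rangle_{\bar{\mathcal{H}}}$, so the real content is to unwind the definitions of $\mathcal{R}$ and of the inner product on $\bar{\mathcal{H}}$ on simple inputs, then use continuity.

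For the base case, take $f=\mathbf{1}_{[0,t]}$. Under the Wiener integral isometry $I:\mathcal{H}\to\mathcal{C}_1$ we have $I(\mathbf{1}_{[0,t]})=B_t$, and from the definition of $\bar{\mathcal{H}}$ the element of $\bar{\mathcal{H}}$ associated to $B_t$ is precisely the function $s\mapsto \mathbb{E}[B_s B_t]=R(s,t)$. On the other hand $\mathcal{R}\mathbf{1}_{[0,t]}$ is, by construction \eqref{eq:IsoR} together with \eqref{eq:def-K-star} and the covariance identity \eqref{eq: kernel representation of covariance function for fBM}, exactly the same function $R(\cdot,t)$. Therefore, if $h\in\bar{\mathcal{H}}$ corresponds to $Z\in\mathcal{C}_1$, then
\begin{equation*}
{}_{\mathcal{H}}\langle \mathbf{1}_{[0,t]},h\rangle_{\bar{\mathcal{H}}}
=\langle R(\cdot,t),h\rangle_{\bar{\mathcal{H}}}=\mathbb{E}[B_t Z]=h_t=\int_0^1 \mathbf{1}_{[0,t]}(s)\,dh_s,
\end{equation*}
which establishes \eqref{eq: H-barH pairing} on indicators. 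Linearity immediately extends the identity to all step functions $f=\sum_i a_i\mathbf{1}_{[t_{i-1},t_i]}$, where the right-hand side becomes the usual Riemann-Stieltjes sum $\sum_i a_i(h_{t_i}-h_{t_{i-1}})$.

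To conclude for $f\in C^{H^-}([0,1];\mathbb{R}^d)$, pick a sequence $(f_n)$ of step functions approximating $f$ both in the uniform and in a suitable H\"older sense. Two convergences must be checked. First, the variational embedding result (Proposition \ref{prop: variational embedding} in the paper) yields that $f_n\to f$ in the $\mathcal{H}$-topology, whence the Cauchy-Schwarz inequality gives $_{\mathcal{H}}\langle f_n,h\rangle_{\bar{\mathcal{H}}}\to{}_{\mathcal{H}}\langle f,h\rangle_{\bar{\mathcal{H}}}$, using that the pairing is continuous and $h$ is fixed. Second, since $h\in\bar{\mathcal{H}}\subset C^{H+1/2-\varepsilon}$ (by Theorem~\ref{thm:bar-cal-H-as frac-integral} and Sobolev-type embeddings) and $f\in C^{H^-}$, the sum of regularities exceeds $1$, so Young's theorem applies and $\int_0^1 f_n\, dh\to\int_0^1 f\, dh$. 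Combining the two limits with the step-function case yields \eqref{eq: H-barH pairing}.

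The main subtlety is the first convergence: although the inclusion $C^{H^-}\subset\mathcal{H}$ is a standard but nontrivial fact for fBm (only really delicate when $H<1/2$, where $\mathcal{H}$ contains distributions and the norm involves fractional derivatives), one needs the step-function approximants $f_n$ to converge to $f$ in $\|\cdot\|_{\mathcal{H}}$, not merely pointwise. This is exactly what the variational embedding provides, since H\"older regularity of $f$ at exponent $>1/2-H$ suffices to control the $\mathcal{H}$-norm of $f-f_n$ by a power of the mesh of the partition. Once this approximation lemma is in hand, the rest of the argument is routine passage to the limit.
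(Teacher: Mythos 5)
Your base case is correct and is a nice concrete alternative to the paper's: the paper verifies the identity on bounded-variation $f$ via $\mathbb{E}[Z\cdot I(f)]$ and Fubini, while you verify it directly on $f=\mathbf{1}_{[0,t]}$ by computing $\mathcal{R}\mathbf{1}_{[0,t]}=R(\cdot,t)$ and pairing with $h$. Both lead to the same evaluation $\mathbb{E}[B_tZ]=h_t$, so the two are essentially equivalent.

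The real gap is in the limit step. You approximate $f\in C^{H^-}$ by \emph{step functions} $f_n$ and then argue that $f_n\to f$ in $\mathcal{H}$ because ``H\"older regularity of $f$ at exponent $>1/2-H$ suffices to control the $\mathcal{H}$-norm of $f-f_n$.'' But when $H<1/2$ the only available control of $\|\cdot\|_{\mathcal{H}}$ by a pathwise norm comes from the continuous embedding $C^{\gamma}\hookrightarrow\mathcal{H}$, $\gamma>1/2-H$; this requires $f-f_n$ itself to be $\gamma$-H\"older. A step function has jumps, so $f-f_n$ is not H\"older continuous for any positive exponent, and the embedding cannot be applied to it. So your argument proves nothing about $\|f-f_n\|_{\mathcal{H}}$ when $H<1/2$. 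This is exactly why the paper uses the \emph{piecewise-linear} (dyadic linear) interpolation $f^n$ of $f$: then $f^n-f$ is $\alpha$-H\"older for $\alpha<H$, its $\alpha$-H\"older norm is controlled by a power of the mesh, and the embedding gives $\|f^n-f\|_{\mathcal{H}}\to 0$. To fix your proof you should replace step functions with piecewise-linear interpolants in the limiting step (keeping step functions only for the algebraic base case is fine, but you then need a second layer of approximation, so it is cleaner to work with piecewise-linear approximants throughout after the indicator computation).

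Two smaller points. First, you have the distribution inclusion backwards: it is for $H>1/2$ that $\mathcal{H}$ contains genuine distributions (the inner product $R$ becomes ``weaker''), whereas for $H<1/2$ one has $\mathcal{H}\subseteq L^2([0,1])$. Second, the inclusion $\bar{\mathcal{H}}\subset C^{H+1/2-\varepsilon}$ is only the right heuristic for $H\le 1/2$; for $H>1/2$ Proposition \ref{prop: variational embedding} gives $\bar{\mathcal{H}}\subseteq C^H_0$, which is still enough (together with $f\in C^{H^-}$ and $2H>1$) for the Young integral to make sense, but the exponent you wrote is not correct in that range.
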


\begin{proof}
First of all, let $h\in\bch$ and $g\in\mathcal{H}$ be such that $\mathcal{R}(g)=h$.  It is easy to see that $g$ can be constructed in the following way. According to Definition \ref{def:bar-H}, there exists a random variable $Z$ in the first chaos $\cac_{1}$ such that $h_t=\mathbb{E}[B_tZ]$. 
The element $g\in\mathcal{H}$ is then given via
the Wiener integral isomorphism between $\ch$ and $\cac_{1}$, that is, the element $g\in\ch$ such that $Z=I(g)$. Also note that we have $h_t=\mathbb{E}[B_t \, I(g)]$.

Now consider  $f\in\ch$ with bounded variation. The natural pairing between $f$ and $h$ is thus given by
\begin{equation*}
_{\mathcal{H}}\langle f,h\rangle_{\bar{\mathcal{H}}}
= \,
_{\mathcal{H}}\langle f,g\rangle_{\mathcal{H}}
=
\mathbb{E}[Z\cdot I(f)].
\end{equation*}
A direct application of Fubini's theorem then yields:
\begin{align}\label{pairing for C1 f}
_{\mathcal{H}}\langle f,h\rangle_{\bar{\mathcal{H}}}
=\mathbb{E}[Z\cdot I(f)]
=\mathbb{E}\left[Z\cdot\int_{0}^{1}f_{s}dB_{s}\right]
=\int_{0}^{1}f_{s} \, \mathbb{E}[ZdB_{s}]=\int_{0}^{1}f_{s}dh_{s}.
\end{align}
For a general $f\in\ch$, let $f^n$ be the dyadic linear approximation of $f$. By the previous argument, \eqref{pairing for C1 f} holds for $f^n$. On the other hand, thanks to our H\"older assumption on $f$,  $f^n$ converges to $f$ in $\alpha$-H\"{o}lder norm for any $\alpha<H$. Hence $_{\mathcal{H}}\langle f^n,h\rangle_{\bar{\mathcal{H}}}$ converges to $_{\mathcal{H}}\langle f,h\rangle_{\bar{\mathcal{H}}}$, thanks to \cite[Lemma 5.1.1]{Nualart06} for $H>1/2$ and to the inclusion $C^\gamma([0,1];\R^d)\subset \ch$ for all $\gamma>1/2-H$ (see e.g. \cite[page 284]{Nualart06}) for $H<1/2$. Finally, $\int_0^1f^n_sdh_s$ converges to $\int_0^1f_sdh_s$ by standard Young's estimate. The proof is thus completed.
\end{proof}

The space $\mathcal{H}$ can also be described in terms of fractional calculus (cf. \cite{PT00}), since the operator $\mathcal{K}^*$ defined by \eqref{eq:def-K-star} can be expressed as
\begin{equation}\label{eq: analytic expression for K*}
(\mathcal{K}^{*}f)(t)=\begin{cases}
C_{H}\cdot t^{\frac{1}{2}-H}\cdot\left(I_{1^{-}}^{H-\frac{1}{2}}\left(s^{H-\frac{1}{2}}f(s)\right)\right)(t), & H>\frac{1}{2};\\
C_{H}\cdot t^{\frac{1}{2}-H}\cdot\left(D_{1^{-}}^{\frac{1}{2}-H}\left(s^{H-\frac{1}{2}}f(s)\right)\right)(t), & H\leq\frac{1}{2}.
\end{cases}
\end{equation} 
Starting from this expression, it is readily checked  that when  $H>1/2$ the space $\mathcal{H}$ coincides with the following subspace of the Schwartz distributions $\cs'$:
\begin{equation}\label{eq:ch-case-H-large}
\ch 
=
\lcl f\in\cs' ; \,  t^{1/2-H}\cdot(I_{1^{-}}^{H-1/2}(s^{H-1/2}f(s)))(t) \text{ is an element of } L^{2}([0,1])  \rcl.
\end{equation}
 In the case $H\leq1/2$, we simply have 
 \begin{equation}\label{eq:ch-case-H-small}
\mathcal{H}=I_{1^{-}}^{1/2-H}(L^{2}([0,1])).
\end{equation}

\begin{rem}
As the Hurst parameter $H$ increases, $\mathcal{H}$ gets larger (and contains distributions when $H>1/2$) while $\bar{\mathcal{H}}$ gets smaller. This fact is apparent from Theorem \ref{thm:bar-cal-H-as frac-integral} and relations~\eqref{eq:ch-case-H-large}-\eqref{eq:ch-case-H-small}.
When $H=1/2$, the process $B_t$ coincides with the usual Brownian motion. In this case, we have $\mathcal{H}=L^2([0,1])$ and $\bar{\mathcal{H}}=W_0^{1,2}$, the space of absolutely continuous paths starting at the origin with square integrable derivative.
\end{rem}

Next we mention a variational embedding theorem for the Cameron-Martin subspace $\bar{\mathcal{H}}$ which will be used in a crucial way. The case when $H>1/2$ is a simple exercise starting from the definition \eqref{eq:def-h-in-CM} of $\bar{\mathcal{H}}$ and invoking the Cauchy-Schwarz inequality. The case when $H\leq 1/2$ was treated in \cite{FV06}. From a pathwise point of view, this allows us to integrate a fractional Brownian path against a Cameron-Martin path or vice versa (cf. \cite{Young36}), and to make sense of ordinary differential equations driven by a Cameron-Martin path (cf. \cite{Lyons94}).

\begin{prop}\label{prop: variational embedding}If $H>\frac{1}{2}$, then
$\bar{\mathcal{H}}\subseteq C_{0}^{H}([0,1];\mathbb{R}^{d})$, the space of H-H\"older continuous paths. If $H\leq\frac{1}{2}$,
then for any $q>\left(H+1/2\right)^{-1}$, we have 
$\bar{\mathcal{H}}\subseteq C_{0}^{q\text{-}\mathrm{var}}([0,1];\mathbb{R}^{d})$, the space of continuous paths with finite $q$-variation. In addition, the above inclusions are continuous embeddings.
\end{prop}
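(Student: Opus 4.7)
For $H>1/2$ I would argue directly from Definition~\ref{def:bar-H}. Any $h\in\bar{\mathcal{H}}$ admits a representation $h_t=\mathbb{E}[B_tZ]$ with $Z\in\mathcal{C}_1$ and $\mathbb{E}[Z^2]=\|h\|_{\bar{\mathcal{H}}}^2$, so the Cauchy-Schwarz inequality combined with the covariance identity $\mathbb{E}[(B_t-B_s)^2]=|t-s|^{2H}$ immediately yields
\[
|h_t-h_s|=|\mathbb{E}[(B_t-B_s)Z]|\leq|t-s|^H\,\|h\|_{\bar{\mathcal{H}}}.
\]
Since $h_0=0$, this shows that $h\in C_0^H([0,1];\mathbb{R}^d)$ with continuous embedding of operator norm at most one, and the multidimensional case is handled component by component.

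For $H\leq 1/2$ the previous argument only delivers an $H$-Hölder bound, which corresponds to a $(1/H)$-variation control and is strictly weaker than the claimed $q$-variation bound whenever $H<1/2$. The plan here is to invoke the variational embedding of Gaussian Cameron-Martin spaces proved in~\cite{FV06}. The key analytic input is that the two-dimensional $\rho$-variation of the covariance function $R(s,t)$ of fractional Brownian motion is finite for $\rho=1/(2H)$ when $H\leq 1/2$. To translate this into information on $h$ itself, I would use the isometric isomorphism $\mathcal{R}:\mathcal{H}\to\bar{\mathcal{H}}$ from \eqref{eq:IsoR} together with the Wiener integral isometry $I$; setting $g=\mathcal{R}^{-1}h$, the identity $B_t-B_s=I(\mathbf{1}_{(s,t]})$ gives, for any partition $0=t_0<\cdots<t_n=1$ and any signs $\epsilon_i\in\{\pm 1\}$,
\[
\sum_{i}\epsilon_i(h_{t_{i+1}}-h_{t_i})=\lla\sum_i\epsilon_i\mathbf{1}_{(t_i,t_{i+1}]},g\rra_{\mathcal{H}}.
\]
Cauchy-Schwarz in $\mathcal{H}$ then reduces the matter to estimating the $\mathcal{H}$-norm of the step function on the right, which unravels into a sum of rectangular increments of $R$ bounded through its 2D $\rho$-variation.

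The main obstacle is extracting the \emph{sharp} exponent $q>(H+1/2)^{-1}$ rather than the weaker $q>1/(2H)$ produced by a straightforward 2D $\rho$-variation estimate. This sharpness is essential for the subsequent Young-type integration against fractional paths of Hurst $H\in(1/4,1/2)$, as already used in \eqref{eq: H-barH pairing}. The refinement in~\cite{FV06} is obtained through an interpolation argument which crucially exploits the fact that $\rho=1/(2H)<2$ precisely when $H>1/4$, this being exactly the regime addressed by the paper. Since the sharp embedding is by now a standard tool in the fBm rough-path literature, the cleanest route is to invoke~\cite{FV06} directly rather than to reproduce its proof here.
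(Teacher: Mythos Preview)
Your proposal is correct and follows exactly the route indicated by the paper: for $H>1/2$ you carry out the Cauchy--Schwarz argument from Definition~\ref{def:bar-H} that the paper only alludes to, and for $H\leq 1/2$ you invoke~\cite{FV06}, which is precisely what the paper does (it gives no independent proof in this case). If anything, you supply more detail than the paper itself; the only minor inaccuracy is the suggestion that the interpolation in~\cite{FV06} requires $H>1/4$---the embedding holds for all $H\in(0,1/2]$, and the threshold $H>1/4$ only enters later when one needs complementary Young regularity against the fBm sample paths.
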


Finally, we prove two general lemmas on the Cameron-Martin subspace that are needed later on. These properties do not seem to be contained in the literature and they require some care based on fractional calculus.  The first one claims the surjectivity of $\ck^{*}$ on properly defined spaces.

\begin{lem}\label{lem: surjectivity of K*}
Let $H\in(0,1)$, and consider 
the operator $\mathcal{K}^*:\mathcal{H}\rightarrow L^2([0,1])$ defined by~\eqref{eq:def-K-star}. Then $\ck^{*}$ is surjective.
\end{lem}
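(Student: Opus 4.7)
The plan is to show that $\mathcal{K}^*$ is an isometric embedding of $\mathcal{H}$ into $L^2([0,1])$ whose range is simultaneously closed (by the isometry) and dense (by an orthogonality argument), hence equal to all of $L^2([0,1])$.

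For the isometry, I would start from the defining relation $\mathcal{K}^*\mathbf{1}_{[0,t]}=K(t,\cdot)$ in \eqref{eq:def-K-star}. Combining the kernel identity \eqref{eq: kernel representation of covariance function for fBM} with the definition of the inner product \eqref{eq:def-space-H}, one obtains, for every $s,t\in[0,1]$,
\[
\langle \mathcal{K}^*\mathbf{1}_{[0,t]},\mathcal{K}^*\mathbf{1}_{[0,s]}\rangle_{L^2([0,1])}=\int_{0}^{s\wedge t}K(t,r)K(s,r)\,dr=R(s,t)=\langle \mathbf{1}_{[0,t]},\mathbf{1}_{[0,s]}\rangle_{\mathcal{H}}.
\]
By bilinearity, $\mathcal{K}^*$ is an isometry on the linear span of step functions. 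Since $\mathcal{H}$ is by construction the Hilbert space completion of this span, the map extends uniquely to an isometry on all of $\mathcal{H}$. In particular, $\mathcal{K}^*(\mathcal{H})$ is complete, hence closed in $L^2([0,1])$.

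Density of the range is the key remaining step, and I would obtain it by a standard duality argument. Suppose $\psi\in L^2([0,1])$ is orthogonal to $\mathcal{K}^*(\mathcal{H})$. Testing against the images of indicators gives, for every $t\in[0,1]$,
\[
0=\langle \mathcal{K}^*\mathbf{1}_{[0,t]},\psi\rangle_{L^2([0,1])}=\int_{0}^{1}K(t,s)\psi(s)\,ds=(K\psi)(t),
\]
where $K$ is the integral operator introduced in \eqref{eq: analytic expression of K}. Since $K:L^2([0,1])\to \bar{\mathcal{H}}=I_{0^+}^{H+1/2}(L^2([0,1]))$ is an isomorphism (as recalled before Theorem \ref{thm:bar-cal-H-as frac-integral}), it is in particular injective, so $\psi\equiv 0$. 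This proves $\mathcal{K}^*(\mathcal{H})^{\perp}=\{0\}$, and together with closedness yields $\mathcal{K}^*(\mathcal{H})=L^2([0,1])$.

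The only slightly delicate point, in my view, is that for $H>1/2$ the space $\mathcal{H}$ contains genuine Schwartz distributions, so one might worry whether the extension of $\mathcal{K}^*$ from step functions to all of $\mathcal{H}$ is legitimate. However, this is settled tautologically by the definition of $\mathcal{H}$ as the completion of step functions under $\langle\cdot,\cdot\rangle_{\mathcal{H}}$: any linear isometry on step functions extends canonically and uniquely to the completion, and the explicit analytic form \eqref{eq: analytic expression for K*} is never invoked in the argument. Everything else is a standard Hilbert space manipulation, with the paper's injectivity of $K$ playing the role of the crucial input.
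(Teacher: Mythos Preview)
Your argument is correct, and it is genuinely different from the paper's proof. You proceed abstractly: establish that $\mathcal{K}^*$ is an isometry (hence has closed range), and then obtain density of the range by duality, reducing everything to the injectivity of $K:L^2([0,1])\to\bar{\mathcal{H}}$, which the paper has already recorded as a fact from \cite{DU97}. This handles all $H\in(0,1)$ uniformly and avoids any computation.

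The paper, by contrast, argues constructively and splits cases. For $H>1/2$ it cites \cite{Nualart06} to the effect that indicator functions lie in the image of $\mathcal{K}^*$. For $H<1/2$ it uses the explicit fractional-calculus formula \eqref{eq: analytic expression for K*} to compute $\mathcal{K}^*f_\beta$ for $f_\beta(t)=t^{1/2-H}(1-t)^{\beta+1/2-H}$, showing that every function of the form $t^{1/2-H}p(1-t)$ with $p$ polynomial is in the range; density of such functions is then obtained via Bernstein's approximation theorem. In both cases the isometry (and hence closedness of the range) is implicitly used to pass from ``dense image'' to ``surjective,'' though the paper does not spell this out.

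Your route is shorter and conceptually cleaner; the paper's route has the minor advantage of producing explicit preimages, which could be useful if one later needed quantitative control, but that is not exploited here.
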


\begin{proof}
If $H>1/2$, we know that the image of $\mathcal{K}^*$ contains all indicator functions (cf. \cite[Equation (5.14)]{Nualart06}). Therefore, $\mathcal{K}^*$ is surjective. 

If $H<1/2$, we first claim that the image of $\mathcal{K}^*$ contains functions of the form $t^{1/2-H}p(1-t)$ where $p(t)$ is a polynomial. 
Indeed, given an arbitrary $\beta\geq0,$ consider the function
$$f_{\beta}(t)\triangleq t^{\frac{1}{2}-H}(1-t)^{\beta+\frac{1}{2}-H}.$$ It is readily checked that $D_{1^{-}}^{\frac{1}{2}-H}f_{\beta}\in L^{2}([0,1])$, and hence $f_{\beta}\in I_{1^{-}}^{\frac{1}{2}-H}(L^{2}([0,1]))=\mathcal{H}$. Using the analytic expression (\ref{eq: analytic expression for K*}) for $\mathcal{K}^*$, we can compute $\mathcal{K}^*f_\beta$ explicitly (cf. \cite[Chapter 2, Equation (2.45)]{KMS93}) as\[
(\mathcal{K}^{*}f_{\beta})(t)=C_{H}\frac{\Gamma\left(\beta+\frac{3}{2}-H\right)}{\Gamma(\beta+1)}t^{\frac{1}{2}-H}(1-t)^{\beta}.
\] Since $\beta$ is arbitrary and $\mathcal{K}^*$ is linear, the claim follows.

Now it remains to show (with a change of variable) that the space of  functions of the form $(1-t)^{\frac{1}{2}-H}p(t)$ with $p(t)$ being a polynomial is dense in $L^2([0,1])$. To this end, let $\varphi\in C_c^\infty((0,1))$. Then $\psi(t)\triangleq (1-t)^{-(1/2-H)}\varphi(t)\in C_{c}^{\infty}((0,1)).$ According to Bernstein's approximation theorem, for any $\varepsilon>0,$ there exists a polynomial $p(t)$ such that \[
\|\psi-p\|_{\infty}<\varepsilon,
\]and thus\[
\sup_{0\leq t\leq1}|\varphi(t)-(1-t)^{\frac{1}{2}-H}p(t)|<\varepsilon.
\]Therefore, functions in $C_c^\infty((0,1))$ (and thus in $L^2([0,1])$) can be approximated by functions of the desired form.
\end{proof}

Our second lemma gives some continuous embedding properties for $\ch$ and $\bch$ in the irregular case $H<1/2$, whose proof relies on Lemma \ref{lem: surjectivity of K*}.

\begin{lem}\label{lem: continuous embedding when H<1/2}
For $H<1/2$, the inclusions $\mathcal{H}\subseteq L^2([0,1])$ and $W_0^{1,2}\subseteq \bar{\mathcal{H}}$ are continuous embeddings.
\end{lem}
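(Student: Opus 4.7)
The plan is to prove the two inclusions separately, deriving the second one from the first via duality.

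For $\ch\subseteq L^{2}([0,1])$, I would use the analytic representation~\eqref{eq: analytic expression for K*} of $\ck^{*}$ combined with Lemma~\ref{lem: surjectivity of K*}, which together say that $\ck^{*}:\ch\to L^{2}([0,1])$ is a bijective isometry for $H<1/2$. Setting $\alpha=1/2-H\in(0,1/2)$ and $\phi=\ck^{*}f\in L^{2}([0,1])$, the identity
\[
\phi(t)=C_{H}\,t^{\alpha}\bigl(D_{1^{-}}^{\alpha}\bigl(s^{-\alpha}f(s)\bigr)\bigr)(t)
\]
can be inverted by dividing by $C_H t^{\alpha}$ and applying $I_{1^{-}}^{\alpha}$, yielding the explicit formula
\[
f(s)=\frac{C_{H}^{-1}}{\Gamma(\alpha)}\int_{s}^{1}\Bigl(\frac{s}{t}\Bigr)^{\!\alpha}(t-s)^{\alpha-1}\phi(t)\,dt.
\]
The crucial point is that the two endpoint weights combine into the factor $(s/t)^{\alpha}\leq 1$ for $0\leq s\leq t$, which gives the pointwise bound $|f(s)|\leq C\,I_{1^{-}}^{\alpha}|\phi|(s)$. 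It then remains to observe that $I_{1^{-}}^{\alpha}$ is bounded on $L^{2}([0,1])$: viewed as a truncated convolution with the $L^{1}([0,1])$ kernel $u^{\alpha-1}/\Gamma(\alpha)$, this follows from Minkowski's integral inequality. Combining these estimates produces $\|f\|_{L^{2}}\leq C\|\phi\|_{L^{2}}=C\|f\|_{\ch}$.

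For $W_{0}^{1,2}\subseteq\bch$, my plan is to exploit the natural duality $\bch\cong\ch^{*}$ induced by $\mathcal{R}=K\circ\ck^{*}$ (equivalently, by the Wiener integral isometry between $\ch$ and the first chaos). Given $h\in W_{0}^{1,2}$, consider the linear functional
\[
L_{h}:\ch\longrightarrow\R,\qquad L_{h}(f)\triangleq\int_{0}^{1}f(s)\,\dot h(s)\,ds,
\]
which is well defined thanks to the first inclusion $\ch\subseteq L^{2}([0,1])$. Cauchy-Schwarz combined with that same inclusion yields
\[
|L_{h}(f)|\leq\|f\|_{L^{2}}\|\dot h\|_{L^{2}}\leq C\|f\|_{\ch}\|\dot h\|_{L^{2}},
\]
so $L_{h}\in\ch^{*}$ corresponds, under the duality $\bch\cong\ch^{*}$, to a unique $\tilde h\in\bch$ with $\|\tilde h\|_{\bch}\leq C\|\dot h\|_{L^{2}}$. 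To identify $\tilde h$ with $h$, I would test against $\mathbf{1}_{[0,t]}\in\ch$: on the one hand, combining Definition~\ref{def:bar-H} with the Wiener integral isomorphism gives ${}_{\ch}\langle\mathbf{1}_{[0,t]},\tilde h\rangle_{\bch}=\tilde h(t)$; on the other hand, by construction ${}_{\ch}\langle\mathbf{1}_{[0,t]},\tilde h\rangle_{\bch}=L_{h}(\mathbf{1}_{[0,t]})=\int_{0}^{t}\dot h(s)\,ds=h(t)$. Hence $\tilde h=h$, which provides simultaneously the set-theoretic inclusion and the norm estimate.

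The main difficulty is concentrated in the first embedding: at first glance the inverse of $\ck^{*}$ produces the kernel $s^{\alpha}t^{-\alpha}(t-s)^{\alpha-1}$, for which one cannot naively apply Minkowski's inequality because the weight $t^{-\alpha}$ is not controlled against arbitrary $L^{2}$ functions. The decisive observation is the cancellation $s^{\alpha}t^{-\alpha}=(s/t)^{\alpha}\leq 1$, which reduces matters to the standard $L^{2}$-boundedness of the Riemann-Liouville fractional integral. Once the first inclusion is in hand, the passage to the second one is routine, the only delicate aspect being the identification of the abstract element produced by duality with the original Cameron-Martin path $h$, which is handled by testing against indicator functions.
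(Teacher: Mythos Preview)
Your proposal is correct, and both embeddings are proved along the same general lines as in the paper, but with noticeably different technical implementations.

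For the first embedding, both you and the paper invert $\ck^{*}$ to write $f$ in terms of $\phi=\ck^{*}f$. The paper then splits the kernel $(t-s)^{-1/2-H}s^{H-1/2}$ via Cauchy--Schwarz inside the integral and finishes with Fubini, whereas you observe directly that the endpoint weights combine into $(s/t)^{\alpha}\leq 1$ and reduce to the $L^{2}$-boundedness of $I_{1^{-}}^{\alpha}$ via Minkowski. Your route is a bit shorter; the paper's makes the intermediate decay in $t$ explicit. (Incidentally, your appeal to Lemma~\ref{lem: surjectivity of K*} in this step is superfluous: you only need that $\phi=\ck^{*}f$ lies in $L^{2}$ with $\|\phi\|_{L^{2}}=\|f\|_{\ch}$, not that $\ck^{*}$ is onto.)

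For the second embedding the two arguments diverge more. The paper first uses the a~priori set inclusion $W_{0}^{1,2}\subseteq I_{0^{+}}^{H+1/2}(L^{2})=\bch$ to write $h=K\varphi$, proves the duality identity $\int_{0}^{1}f\,dh=\int_{0}^{1}(\ck^{*}f)\varphi$, and then invokes the surjectivity of $\ck^{*}$ (Lemma~\ref{lem: surjectivity of K*}) to choose the specific test function $f$ with $\ck^{*}f=\varphi$, which realizes the supremum in the $W^{1,2}$ norm up to the constant from the first embedding. You instead use abstract Riesz representation on $\ch$ to produce $\tilde h\in\bch$ and identify $\tilde h=h$ by testing against indicators. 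Your approach avoids both the a~priori set inclusion and the explicit use of surjectivity of $\ck^{*}$, and it delivers membership and the norm bound in one stroke; the paper's approach is more concrete in that it exhibits the optimal test function explicitly.
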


\begin{proof}

For the first assertion, let $f\in\mathcal{H}$. We wish to prove that
\begin{equation}\label{eq: continuity of H to L^2}
\|f\|_{L^{2}([0,1])}\leq C_{H}\|f\|_{\mathcal{H}}.
\end{equation}
Towards this aim, define $\varphi\triangleq \mathcal{K}^*f$, where $\ck^{*}$ is defined by \eqref{eq:def-K-star}. Observe that $\ck^{*}:\ch\to L^2([0,1])$ and thus $f\in L^2([0,1])$. By solving $f$ in terms of $\varphi$ using the analytic expression~\eqref{eq: analytic expression for K*} for $\mathcal{K}^*$, we have 
\beq\label{a1}
f(t)=C_H t^{\frac{1}{2}-H}\left(I_{1-}^{\frac{1}{2}-H}\left(s^{H-\frac{1}{2}}\varphi(s)\right)\right)(t).
\eeq
We now bound the right hand side of \eqref{a1}. Our first step in this direction is to notice that according to the definition \eqref{eq:def-frac-integral} of fractional integral we have
\begin{align*}
 \left|\left(I_{1^{-}}^{\frac{1}{2}-H}(s^{H-\frac{1}{2}}\varphi(s))\right)(t)\right|
 & =C_{H}\left|\int_{t}^{1}(s-t)^{-\frac{1}{2}-H}s^{H-\frac{1}{2}}\varphi(s)ds\right|\\
 & \leq C_{H}\int_{t}^{1}(s-t)^{-\frac{1}{2}-H}s^{H-\frac{1}{2}}|\varphi(s)|ds\\
 & =C_{H}\int_{t}^{1}(s-t)^{-\frac{1}{4}-\frac{H}{2}} \left((s-t)^{-\frac{1}{4}-\frac{H}{2}}s^{H-\frac{1}{2}}|\varphi(s)|\right)ds .
 \end{align*}
Hence a direct application of Cauchy-Schwarz inequality gives
\begin{align}\label{a2}
  \left|\left(I_{1^{-}}^{\frac{1}{2}-H}(s^{H-\frac{1}{2}}\varphi(s))\right)(t)\right|
& \leq C_{H}\left(\int_{t}^{1}(s-t)^{-\frac{1}{2}-H}ds\right)^{\frac{1}{2}}
 \left(\int_{t}^{1}(s-t)^{-\frac{1}{2}-H}s^{2H-1}|\varphi(s)|^{2}ds\right)^{\frac{1}{2}} \notag\\
 & =C_{H} (1-t)^{\frac{1}{2}\lp\frac{1}{2}-H\rp} 
 \left(\int_{t}^{1}(s-t)^{-\frac{1}{2}-H}s^{2H-1}|\varphi(s)|^{2}ds\right)^{\frac{1}{2}},
\end{align}
where we recall that $C_{H}$ is a positive constant which can change from line to line.
Therefore, plugging \eqref{a2} into \eqref{a1} we obtain
\begin{equation*}
\|f\|_{L^{2}([0,1])}^{2}  
\leq 
C_{H}\int_{0}^{1} \lp t^{1-2H}(1-t)^{\frac{1}{2}-H}\int_{t}^{1}(s-t)^{-\frac{1}{2}-H}s^{2H-1}|\varphi(s)|^{2}ds \rp dt.
\end{equation*}
We now bound all the terms of the form $s^{\beta}$ with $\beta>0$ by 1. This gives
\begin{align*}
\|f\|_{L^{2}([0,1])}^{2} 
 & \leq C_{H}\int_{0}^{1}dt\int_{t}^{1}(s-t)^{-\frac{1}{2}-H}|\varphi(s)|^{2}ds
  =C_{H}\int_{0}^{1}|\varphi(s)|^{2}ds\int_{0}^{s}(s-t)^{-\frac{1}{2}-H}dt \\
 & =C_{H}\int_{0}^{1}s^{\frac{1}{2}-H}  |\varphi(s)|^{2}ds
  \leq C_{H}\|\varphi\|_{L^{2}([0,1])}^{2}
  =C_{H}\|f\|_{\mathcal{H}}^{2},
\end{align*}
which is our claim \eqref{eq: continuity of H to L^2}.

For the second assertion about the embedding of $W_0^{1,2}$ in $\bar{\mathcal{H}}$, let $h\in W_0^{1,2}$. We thus also have  $h\in\bch$ and we can write $h=K\varphi$ for some $\varphi\in L^2([0,1])$. We first claim that 
\begin{equation}\label{eq: duality}
\int_{0}^{1}f(s)dh(s)=\int_{0}^{1}\mathcal{K}^{*}f(s)\varphi(s)ds
\end{equation}
for all $f\in\mathcal{H}$. This assertion can be reduced in the following way: since $\ch\hookrightarrow L^{2}([0,1])$ continuously and $\ck^{*}:\ch\to L^{2}([0,1])$ is continuous, one can take limits along indicator functions in \eqref{eq: duality}. Thus it is sufficient to consider $f=\mathbf{1}_{[0,t]}$ in \eqref{eq: duality}. In addition, relation~\eqref{eq: duality} can be checked easily for $f=\mathbf{1}_{[0,t]}$. Namely we have
\[
\int_{0}^{1}\mathbf{1}_{[0,t]}(s)dh(s)=h(t)=\int_{0}^{t}K(t,s)\varphi(s)ds=\int_{0}^{1}\left(\mathcal{K}^{*}\mathbf{1}_{[0,t]}\right)(s)\varphi(s)ds.
\]
Therefore, our claim  (\ref{eq: duality}) holds true. Now from Lemma \ref{lem: surjectivity of K*}, if $\vp\in L^{2}([0,1])$ there exists $f\in\mathcal{H}$ such that $\varphi=\mathcal{K}^*f$. For this particular $f$, invoking relation~\eqref{eq: duality} we get
\beq\label{a21}
\int_{0}^{1}f(s)dh(s)=\|\varphi\|_{L^{2}([0,1])}^{2}.
\eeq
But we also know that
\beq\label{a3}
\|\varphi\|_{L^{2}([0,1])}=\|h\|_{\bar{\mathcal{H}}}=\|f\|_{\mathcal{H}},
\quad\text{and thus}\quad
\|\varphi\|_{L^{2}([0,1])}^{2} = \|h\|_{\bar{\mathcal{H}}}  \|f\|_{\mathcal{H}}.
\eeq
In addition recall that the $W^{1,2}$ norm can be written as
\begin{equation*}
\|h\|_{W^{1,2}}  =\sup_{\psi\in L^{2}([0,1])}\frac{\left|\int_{0}^{1}\psi(s)dh(s)\right|}{\|\psi\|_{L^{2}([0,1])}}
\end{equation*}
Owing to \eqref{a21} and \eqref{a3} we thus get
\begin{align*}
\|h\|_{W^{1,2}} & 
\geq\frac{\int_{0}^{1}f(s)dh(s)}{\|f\|_{L^{2}([0,1])}}
 =\frac{\|h\|_{\bar{\mathcal{H}}}\|f\|_{\mathcal{H}}}{\|f\|_{L^{2}([0,1])}}\geq C_{H}\|h\|_{\bar{\mathcal{H}}},
\end{align*}
where the last step stems from (\ref{eq: continuity of H to L^2}). The continuous embedding $W_0^{1,2}\subseteq \bar{\mathcal{H}}$ follows.
\end{proof}

\subsection{Malliavin calculus for fractional Brownian motion.}

In this section we review some basic aspects of Malliavin calculus and set up  corresponding notations. The reader is referred to~\cite{Nualart06} for further details.

We consider the fractional Brownian motion $B=(B^1,\ldots,B^d)$ as in Definition \eqref{def:fbm}, defined on a complete probability space $(\Omega, \cf, \mathbb{P})$. For sake of simplicity, we assume that $\cf$ is generated by $\{B_{t}; \, t\in[0,T]\}$. An $\mathcal{F}$-measurable real
valued random variable $F$ is said to be \textit{cylindrical} if it can be
written, with some $m\ge 1$, as
\begin{equation*}
F=f\lp  B_{t_1},\ldots,B_{t_m}\rp,
\quad\mbox{for}\quad
0\le t_1<\cdots<t_m \le 1,
\end{equation*}
where $f:\mathbb{R}^m \rightarrow \mathbb{R}$ is a $C_b^{\infty}$ function. The set of cylindrical random variables is denoted by~$\mathcal{S}$. 

\smallskip

The Malliavin derivative is defined as follows: for $F \in \mathcal{S}$, the derivative of $F$ in the direction $h\in\ch$ is given by
\[
\mathbf{D}_h F=\sum_{i=1}^{m}  \frac{\partial f}{\partial
x_i} \left( B_{t_1},\ldots,B_{t_m}  \right) \, h_{t_i}.
\]
More generally, we can introduce iterated derivatives. Namely, if $F \in
\mathcal{S}$, we set
\[
\mathbf{D}^k_{h_1,\ldots,h_k} F = \mathbf{D}_{h_1} \ldots\mathbf{D}_{h_k} F.
\]
For any $p \geq 1$, it can be checked that the operator $\mathbf{D}^k$ is closable from
$\mathcal{S}$ into $\mathbf{L}^p(\oom;\ch^{\otimes k})$. We denote by
$\mathbb{D}^{k,p}(\ch)$ the closure of the class of
cylindrical random variables with respect to the norm
\begin{equation} \label{eq:malliavin-sobolev-norm}
\left\| F\right\| _{k,p}=\left( \mathbb{E}\left[|F|^{p}\right]
+\sum_{j=1}^k \mathbb{E}\left[ \left\| \mathbf{D}^j F\right\|
_{\ch^{\otimes j}}^{p}\right] \right) ^{\frac{1}{p}},
\end{equation}
and we also set $\mathbb{D}^{\infty}(\ch)=\cap_{p \geq 1} \cap_{k\geq 1} \mathbb{D}^{k,p}(\ch)$. 

\smallskip

Estimates of Malliavin derivatives are crucial in order to get information about densities of random variables, and Malliavin covariance matrices as well as non-degenerate random variables will feature importantly in the sequel.
\begin{defn}\label{non-deg}
Let $F=(F^1,\ldots , F^n)$ be a random vector whose components are in $\mathbb{D}^\infty(\ch)$. Define the \textit{Malliavin covariance matrix} of $F$ by
\begin{equation} \label{malmat}
\gamma_F=(\langle \mathbf{D}F^i, \mathbf{D}F^j\rangle_{\ch})_{1\leq i,j\leq n}.
\end{equation}
Then $F$ is called  {\it non-degenerate} if $\gamma_F$ is invertible $a.s.$ and
$$(\det \gamma_F)^{-1}\in \cap_{p\geq1}L^p(\Omega).$$
\end{defn}
It is a classical result that the law of a non-degenerate random vector $F=(F^1, \ldots , F^n)$ admits a smooth density with respect to the Lebesgue measure on $\mr^n$. 

\section{Proof of main results.}\label{sec: ellip.}

In this section, we prove Theorem~\ref{thm: local comparison} and Theorem~\ref{thm: local lower estimate}. We emphasize that our analysis relies crucially on the uniform ellipticity of the vector fields $V_i$'s in equation \eqref{sde-intro}, which is spelled out explicitly below.
\begin{uell*}
The $\cac_{b}^{\infty}$ vector fields $V=\{V_1,\ldots,V_d\}$ are such that
\beq\label{eq:hyp-elliptic}
\Lambda_{1}|\xi|^{2}\leq\xi^{*}V(x)V(x)^{*}\xi\leq\Lambda_{2}|\xi|^{2},\ \ \ \forall x,\xi\in\mathbb{R}^{N},
\eeq
with some constants $\Lambda_1,\Lambda_2>0$, where $(\cdot)^*$ denotes matrix transpose. 
\end{uell*}

\noindent
We now split our proofs in two subsections, corresponding respectively to Theorem~\ref{thm: local comparison} and Theorem~\ref{thm: local lower estimate}.

\subsection{Proof of the distance comparison}
In order to prove Theorem~\ref{thm: local comparison},
recall first that $\Phi_t(x;\cdot): \bar{\mathcal{H}}\to C[0,1]$ is the deterministic It\^{o} map associated to equation~\eqref{sde-intro}. For $x,y\in\mathbb{R}^N$, set
\begin{equation}\label{eq: Pi_xy}
\Pi_{x,y}\triangleq\left\{ h\in\bar{\mathcal{H}}:\Phi_{1}(x;h)=y\right\} 
\end{equation}
the  set of Cameron-Martin paths that joining $x$ to $y$ though the It\^{o} map. Under our assumption \eqref{eq:hyp-elliptic} it is easy to construct an $h\in\bar{\mathcal{H}}\in\Pi_{x,y}$ explicitly, which will ease our computations later on.

\begin{lem}\label{lem: explicit construction of joining h}
Let $V=\{V_1,\ldots,V_d\}$ be vector fields satisfying the uniform elliptic assumption~\eqref{eq:hyp-elliptic}. Given $x,y\in\mathbb{R}^N$, define 
\beq\label{eq:def-ht-elliptic-case}
h_{t}\triangleq\int_{0}^{t}V^{*}(z_{s})\cdot(V(z_{s})V^{*}(z_{s}))^{-1}\cdot(y-x)ds ,
\eeq
where $z_{t}\triangleq(1-t)x+ty$ is the line segment from $x$ to $y$. Then $h\in\Pi_{x,y}$, where $\Pi_{x,y}$ is defined by relation \eqref{eq: Pi_xy}. 
\end{lem}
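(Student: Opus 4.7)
The plan is to check two things: (i) that $h$ defined by \eqref{eq:def-ht-elliptic-case} is a well-defined element of $\bar{\mathcal{H}}$, and (ii) that $\Phi_1(x;h)=y$. The key design idea behind the formula is that the pseudo-inverse $V^\ast(VV^\ast)^{-1}$, which exists thanks to uniform ellipticity, is precisely what is needed so that the ODE driven by $h$ produces the straight-line path $z_t=(1-t)x+ty$.

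First I would verify smoothness of $h$. By the uniform ellipticity assumption \eqref{eq:hyp-elliptic}, the matrix $V(z_s)V^\ast(z_s)$ is invertible for every $s\in[0,1]$ with smallest eigenvalue bounded below by $\Lambda_1>0$. Since $V\in\mathcal{C}_b^\infty$ and $s\mapsto z_s$ is smooth, the integrand in \eqref{eq:def-ht-elliptic-case} is smooth and uniformly bounded on $[0,1]$. Hence $h\in C^\infty([0,1];\mathbb{R}^d)$ with $h_0=0$, and in particular $h\in W_0^{1,2}$. For $H\leq 1/2$ the embedding $W_0^{1,2}\subseteq\bar{\mathcal{H}}$ from Lemma \ref{lem: continuous embedding when H<1/2} immediately gives $h\in\bar{\mathcal{H}}$. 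For $H>1/2$ one uses instead the characterization $\bar{\mathcal{H}}=I_{0^+}^{H+1/2}(L^2([0,1]))$ from Theorem \ref{thm:bar-cal-H-as frac-integral}: since $h$ is smooth and vanishes at zero, writing $h=I_{0^+}^1\dot h$ and applying $D_{0^+}^{H-1/2}$ to $\dot h$ shows that $K^{-1}h\in L^2$.

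Next I would verify the identity $\Phi_1(x;h)=y$. Because $h$ is $C^1$, the rough (or Young) integral against $h$ collapses to the Lebesgue integral against $\dot h$, so $X_t\triangleq\Phi_t(x;h)$ solves the ODE $\dot X_t=V(X_t)\dot h_t$ with $X_0=x$. A direct substitution shows that $X_t=z_t$ is a solution: indeed $z_0=x$ and
\[
V(z_s)\dot h_s \;=\; V(z_s)V^\ast(z_s)\bigl(V(z_s)V^\ast(z_s)\bigr)^{-1}(y-x) \;=\; y-x \;=\; \dot z_s.
\]
By uniqueness of solutions to ODEs with smooth (and hence locally Lipschitz) coefficients, $X_t=z_t$ on $[0,1]$, and in particular $\Phi_1(x;h)=z_1=y$, so that $h\in\Pi_{x,y}$.

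The statement is short and the argument is essentially a one-line verification once the formula is guessed; the only mildly delicate point is the Cameron-Martin membership in the regular range $H>1/2$, but this is just a routine fractional-calculus computation on the smooth path $h$, and so I expect no real obstacle.
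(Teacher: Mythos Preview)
Your proposal is correct and follows essentially the same approach as the paper: show $h\in\bar{\mathcal{H}}$ because it is smooth, and verify that the straight-line path $z_t$ solves the ODE $\dot X_t=V(X_t)\dot h_t$ with $X_0=x$, so that $\Phi_1(x;h)=z_1=y$. The paper dispatches the Cameron-Martin membership in one line (``$\bar{\mathcal H}=I_{0^+}^{H+1/2}(L^2)$ contains smooth paths''), whereas you split into the cases $H\le 1/2$ and $H>1/2$; and the paper leaves the ODE uniqueness implicit, whereas you spell it out---but these are only differences in level of detail, not in strategy.
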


\begin{proof}
Since $\bar{\mathcal{H}}=I_{0^+}^{H+1/2}(L^2([0,1]))$ contains smooth paths, it is obvious that $h\in\bar{\mathcal{H}}$. As far as $z_{t}$ is concerned, the definition $z_{t}=(1-t)x+ty$ clearly implies that $z_0=x,z_1=y$ and $\dot{z}_{t}=y-x$. In addition, since $VV^{*}(\xi)$ is invertible for all $\xi\in\R^{N}$ under our condition \eqref{eq:hyp-elliptic}, we get
\[
\dot{z}_{t}=y-x=\left(VV^{*}(VV^{*})^{-1}\right)(z_{t})\cdot(y-x)=V(z_{t})\dot{h}_{t},
\] 
where the last identity stems from the definition \eqref{eq:def-ht-elliptic-case} of $h$.
Therefore $h\in\Pi_{x,y}$ according to our definition \eqref{eq: Pi_xy}.
\end{proof}

\begin{rem}
The intuition behind Lemma \ref{lem: explicit construction of joining h} is very simple. Indeed, given any smooth path $x_t$ with $x_0=x,x_1=y$, since the vector fields are elliptic, there exist smooth functions $\lambda^1(t),\ldots,\lambda^d (t)$, such that\[
\dot{x}_{t}=\sum_{\alpha=1}^{d}\lambda^{\alpha}(t)V_{\alpha}(x_{t}),\ \ \ 0\leq t\leq1.
\]In matrix notation, $\dot{x}_t=V(x_t)\cdot\lambda(t)$. A canonical way to construct $\lambda(t)$ is writing it as $\lambda(t)=V^*(x_t)\eta(t)$ so that from ellipticity we can solve for $\eta(t)$ as $$\eta(t)=(V(x_{t})V^{*}(x_{t}))^{-1}\dot{x}_{t}.$$
It follows that the path $h_t\triangleq\int_0^t \lambda(s)ds$ belongs to $\Pi_{x,y}$.
\end{rem}

Now we can prove the following result which asserts that the control distance function is locally comparable with the Euclidean metric, that is  Theorem \ref{thm: local lower estimate} under elliptic assumptions.

\begin{thm}\label{thm: local comparison in elliptic case}
Let $V=\{V_1,\ldots,V_d\}$ be vector fields satisfying the uniform elliptic assumption \eqref{eq:hyp-elliptic}. Consider  the control distance $d$ given in \eqref{def: distance} for a given $H>\frac14$.
Then there exist constants $C_1,C_2>0$ depending only on $H$ and the vector fields, such that 
\beq\label{eq:local-euclid-d-elliptic}
C_{1}|x-y|\leq d(x,y)\leq C_{2}|x-y|
\eeq
for all $x,y\in\mathbb{R}^N$ with $|x-y|\leq1.$
\end{thm}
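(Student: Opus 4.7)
My plan is to prove the two inequalities in \eqref{eq:local-euclid-d-elliptic} independently.

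\textbf{Upper bound.} I would use the explicit path $h$ defined in \eqref{eq:def-ht-elliptic-case} of Lemma~\ref{lem: explicit construction of joining h} as a competitor in the infimum defining $d(x,y)$. Uniform ellipticity \eqref{eq:hyp-elliptic} ensures that $V^{*}(z)(V(z)V^{*}(z))^{-1}$ is uniformly bounded in $z$, so $|\dot h_{t}|\le C|y-x|$ for every $t\in[0,1]$. There remains to show $\|h\|_{\bar{\mathcal{H}}}\le C\|\dot h\|_{\infty}$. For $H\le 1/2$ this is exactly the continuous embedding $W_{0}^{1,2}\subseteq\bar{\mathcal{H}}$ supplied by Lemma~\ref{lem: continuous embedding when H<1/2}. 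For $H>1/2$, differentiating the covariance \eqref{eq:cov-fBm} twice yields the standard representation
$$\|h\|_{\bar{\mathcal{H}}}^{2}=H(2H-1)\int_{0}^{1}\!\!\int_{0}^{1}\dot h(u)\cdot\dot h(v)\,|u-v|^{2H-2}\,du\,dv,$$
whose right-hand side is dominated by $C_{H}\|\dot h\|_{\infty}^{2}$ since $|u-v|^{2H-2}$ is integrable on $[0,1]^{2}$. In either case $\|h\|_{\bar{\mathcal{H}}}\le C|y-x|$, giving the upper bound.

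\textbf{Lower bound.} Fix an arbitrary $h\in\Pi_{x,y}$ and set $X_{t}=\Phi_{t}(x;h)$, so that $y-x=\int_{0}^{1}V(X_{s})\,dh_{s}$. By Proposition~\ref{prop: variational embedding}, since $H>1/4$ one may pick some $q<2$ for which $\|h\|_{q\text{-var}}\le C\|h\|_{\bar{\mathcal{H}}}$. Standard Young ODE estimates then provide $\|X\|_{q\text{-var}}\le F(\|h\|_{q\text{-var}})$ for a continuous function $F$ with $F(0)=0$, so $\|X\|_{q\text{-var}}\le C\|h\|_{q\text{-var}}$ whenever $\|h\|_{\bar{\mathcal{H}}}\le 1$. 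Young's inequality (valid because $2/q>1$) applied to $\int V(X)\,dh$ produces
$$|y-x|\le \|V\|_{\infty}|h_{1}|+C_{q}\|V(X)\|_{q\text{-var}}\|h\|_{q\text{-var}}\le C\|h\|_{\bar{\mathcal{H}}}\bigl(1+\|h\|_{\bar{\mathcal{H}}}\bigr).$$
A dichotomy finishes the proof: if $\|h\|_{\bar{\mathcal{H}}}\ge 1$ then trivially $\|h\|_{\bar{\mathcal{H}}}\ge 1\ge|x-y|$ thanks to the hypothesis $|x-y|\le 1$; otherwise the quadratic term is absorbed into the linear one and $|y-x|\le 2C\|h\|_{\bar{\mathcal{H}}}$. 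Taking the infimum over $h\in\Pi_{x,y}$ yields $d(x,y)\ge C_{1}|x-y|$.

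\textbf{Main difficulty.} The real work lies in the lower bound in the genuinely rough regime $1/4<H<1/2$, where one must set up the Young ODE theory in the $q$-variation framework of \cite{FV10} and verify that the implied constants depend only on $V$ and $H$, not on the path $h$ itself. By contrast, the upper bound and the $H>1/2$ part of the lower bound are essentially routine applications of the Cameron-Martin structure developed in Section~\ref{sec: prel.} and classical Young theory.
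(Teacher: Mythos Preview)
Your lower bound argument and the $H\le 1/2$ upper bound are correct and essentially match the paper's proof.

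The gap is in the upper bound for $H>1/2$. The displayed identity
\[
\|h\|_{\bar{\mathcal{H}}}^{2}=H(2H-1)\int_{0}^{1}\!\!\int_{0}^{1}\dot h(u)\cdot\dot h(v)\,|u-v|^{2H-2}\,du\,dv
\]
is false: the right-hand side equals $\|\dot h\|_{\mathcal{H}}^{2}$, and differentiation is \emph{not} the isometry $\bar{\mathcal{H}}\to\mathcal{H}$ unless $H=1/2$ (the isometry $\mathcal{R}^{-1}$ sends $R(\cdot,t)\mapsto\mathbf{1}_{[0,t]}$, whereas differentiation sends it to $\partial_{s}R(s,t)$). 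In fact the inequality goes the wrong way: for $H>1/2$ one has $\bar{\mathcal{H}}=I_{0^{+}}^{H+1/2}(L^{2})\subsetneq W_{0}^{1,2}$, so the $\bar{\mathcal{H}}$-norm is \emph{stronger} than $\|\dot h\|_{L^{2}}$ and a fortiori than $\|\dot h\|_{\mathcal{H}}$. A bound of the form $\|h\|_{\bar{\mathcal{H}}}\le C\|\dot h\|_{\infty}$ is therefore not available; merely Lipschitz control on $h$ cannot place it in $I_{0^{+}}^{H+1/2}(L^{2})$ with a uniform norm estimate. The paper handles this by an explicit fractional-calculus computation: writing $h=\gamma\cdot(y-x)$ with $\gamma_{t}=\int_{0}^{t}g((1-s)x+sy)\,ds$ for $g=V^{*}(VV^{*})^{-1}\in C_{b}^{\infty}$, one inverts $K$ directly from~\eqref{eq: analytic expression of K} and obtains the pointwise bound $|K^{-1}\gamma(t)|\le C_{H,V}(t^{1/2-H}+|y-x|)\in L^{2}([0,1])$, which genuinely uses the smoothness of $g$ (a derivative of $g$ appears), not just its size. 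Contrary to your closing remark, the analytic cost here sits in the $H>1/2$ upper bound rather than the $H<1/2$ lower bound.
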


\begin{proof}

We first consider the case when $H\leq1/2$, which is simpler due to Lemma \ref{lem: continuous embedding when H<1/2}. Given $x,y\in\mathbb{R}^N$, define $h\in\Pi_{x,y}$ as in Lemma \ref{lem: explicit construction of joining h}. According to Lemma \ref{lem: continuous embedding when H<1/2} 
and \eqref{def: distance} we have
\[
d(x,y)^{2}\leq\|h\|_{\bar{\mathcal{H}}}^{2}\leq C_{H}\|h\|_{W^{1,2}}^{2}.\]
Therefore, according to the definition (\ref{eq:def-ht-elliptic-case}) of $h$, we get
\[
d(x,y)^{2}\leq C_{H}\int_{0}^{1}|V^*(z_s)(V(z_{s})V^*(z_s))^{-1}\cdot(y-x)|^{2}ds\leq C_{H,V}|y-x|^{2},
\]
where the last inequality stems from the uniform ellipticity assumption \eqref{eq:hyp-elliptic} and the fact that $V^*$ is bounded. This proves the upper bound in \eqref{eq:local-euclid-d-elliptic}.

We now turn to the lower bound in \eqref{eq:local-euclid-d-elliptic}. To this aim, consider $h\in\Pi_{x,y}$. We assume (without loss of generality) in the sequel that
\begin{equation}\label{b1}
\|h\|_{\bar{\mathcal{H}}}
\leq
2 d(x,y)
\leq
2 C_{2},
\end{equation}
where the last inequality is due to the second part of inequality \eqref{eq:local-euclid-d-elliptic} and the fact that $|x-y|\leq1$.
Then recalling the definition \eqref{eq: Pi_xy} of $\Pi_{x,y}$ we have 
\[
y-x=\int_{0}^{1}V(\Phi_{t}(x;h))dh_{t}.
\]
According to Proposition \ref{prop: variational embedding} (specifically the embedding $\bar{\mathcal{H}}\subseteq C_{0}^{q-\mathrm{var}}([0,1];\mathbb{R}^{d})$ for $q>(H+1/2)^{-1}$) and the pathwise variational estimate given by \cite[Theorem 10.14]{FV10},   we have 
\begin{equation}\label{b2}
|y-x|  \leq C_{H,V} \lp \|h\|_{q-{\rm var}}\vee\|h\|_{q-{\rm var}}^{q} \rp
  \leq C_{H,V} \lp \|h\|_{\bar{\mathcal{H}}}\vee\|h\|_{\bar{\mathcal{H}}}^{q} \rp.
\end{equation}
Since $q\geq1$ and owing to \eqref{b1}, we conclude that 
\[
|y-x|\leq C_{H,V}\|h\|_{\bar{\mathcal{H}}}
\] for all $x,y$ with $|y-x|\leq1$. Since $h\in\Pi_{x,y}$ is arbitrary provided \eqref{b1} holds true,  the lower bound in \eqref{eq:local-euclid-d-elliptic} follows again by a direct application of \eqref{def: distance}.

Next we consider the case when $H>1/2$. The lower bound in \eqref{eq:local-euclid-d-elliptic} can be proved with the same argument as in the case $H\leq1/2$, 
the only difference being that in \eqref{b2} we replace $\bar{\mathcal{H}}\subseteq C_{0}^{q-\mathrm{var}}([0,1];\mathbb{R}^{d})$ by $\bar{\mathcal{H}}\subseteq C_{0}^{H}([0,1];\mathbb{R}^{d})$ and the pathwise variational estimate of \cite[Theorem 10.14]{FV10} by a
H\"older estimate borrowed from  \cite[Proposition 8.1]{FH14}. 

For the upper bound in \eqref{eq:local-euclid-d-elliptic}, we again take $h\in\Pi_{x,y}$ as given by Lemma \ref{lem: explicit construction of joining h} and estimate its Cameron-Martin norm. Note that due to our uniform ellipticity assumption \eqref{eq:hyp-elliptic}, one can define the function 
\begin{align}\label{eq: def gamma}
\gamma_t\equiv\int_0^t (V^*(VV^*)^{-1})(z_s)ds=\int_0^t g((1-s)x+sy)ds,\end{align} 
where $g$ is a matrix-valued $C_b^\infty$ function. We will now prove that $\gamma$ can be written as $\gamma=K\varphi$ for $\varphi\in L^2([0,1])$. Indeed, one can solve for $\varphi$ in the analytic expression (\ref{eq: analytic expression of K}) for $H>1/2$ and get 
\begin{align*}
\varphi(t) & =C_{H}t^{H-\frac{1}{2}}\left(D_{0^{+}}^{H-\frac{1}{2}}\left(s^{\frac{1}{2}-H}\dot{\gamma}_{s}\right)\right)(t).
\end{align*}
We now use the expression \eqref{eq:def-frac-deriv} for $D^{H-1/2}_{0^+}$, which yield (after an elementary change of variable)
\begin{align*}
 \varphi(t)& =C_{H}t^{H-\frac{1}{2}}\frac{d}{dt}\int_{0}^{t}s^{\frac{1}{2}-H}(t-s)^{\frac{1}{2}-H}g((1-s)x+sy)ds\\
 & =C_{H}t^{H-\frac{1}{2}}\frac{d}{dt}\left(t^{2-2H}\int_{0}^{1}(u(1-u))^{\frac{1}{2}-H}g((1-tu)x+tuy)du\right)\\
 & =C_{H}t^{\frac{1}{2}-H}\int_{0}^{1}(u(1-u))^{\frac{1}{2}-H}g((1-tu)x+tuy)du\\
 & \ \ \ +C_{H}t^{\frac{3}{2}-H}\int_{0}^{1}(u(1-u))^{\frac{1}{2}-H}u\nabla g((1-tu)x+tuy)\cdot(y-x)du.
\end{align*}
Hence, thanks to the fact that $g$ and $\nabla g$ are bounded plus the fact that $t\leq1$, we get 
\[
|\varphi(t)|\leq C_{H,V}(t^{\frac{1}{2}-H}+|y-x|),
\]from which $\varphi$ is clearly an element of $L^2([0,1])$. Since $|y-x|\leq 1$, we conclude that \[
\|\gamma\|_{\bar{\mathcal{H}}}=\|\varphi\|_{L^{2}([0,1])}\leq C_{H,V}.
\] Therefore, recalling that $h$ is given by \eqref{eq:def-ht-elliptic-case} and $\gamma$ is defined by \eqref{eq: def gamma}, we end up with
\begin{align*}
d(x,y)\leq\|h\|_{\bar{\mathcal{H}}}&=\left\|\left(\int_{0}^{\cdot}(V^*(VV^*)^{-1})(z_{s})ds\right)\cdot(y-x)\right\|_{\bar{\mathcal{H}}}\\
&=\|\gamma\|_{\bar{\mathcal{H}}}|y-x|\leq C_{H,V}|y-x|.
\end{align*}
This concludes the proof.
\end{proof}


\subsection{Lower bounds for the density}

With Theorem \ref{thm: local comparison in elliptic case} in hand, we are now ready to state Theorem~\ref{thm: local lower estimate} rigorously and prove it. Specifically, our main local bound on the density of $X_{t}$ takes the following form.

\begin{thm}\label{thm: local lower estimate in elliptic case}
Let $p(t,x,y)$ be the density of the solution $X_t$ to equation \eqref{sde-intro}. Under the uniform ellipticity assumption (\ref{eq:hyp-elliptic}), there exist constants $C_1,C_2,\tau>0$ depending only on $H$ and the vector fields $V$, such that \begin{align}\label{local lower estimate-elliptic}p(t,x,y)\geq\frac{C_1}{t^{NH}}\end{align} for all $(t,x,y)\in(0,1]\times\mathbb{R}^N\times\mathbb{R}^N$ satisfying $|x-y|\leq C_2 t^H$ and $t<\tau$.
\end{thm}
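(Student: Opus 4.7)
The strategy I would pursue is a scaling reduction to a frozen-coefficient Gaussian limit. Set $\epsilon \triangleq t^H$ and use the self-similarity $\{B_{ts}\}_{s\in[0,1]}\stackrel{d}{=}\{\epsilon\,\tilde B_s\}_{s\in[0,1]}$ for a fBm $\tilde B$ on $[0,1]$. Introduce the rescaled process
\[
\tilde X^{(\epsilon)}_s \triangleq \frac{X_{ts}-x}{\epsilon}, \qquad s\in[0,1],
\]
which satisfies (in distribution) the rough differential equation
\[
\tilde X^{(\epsilon)}_s = \sum_{i=1}^{d}\int_0^s V_i^{(\epsilon)}(\tilde X^{(\epsilon)}_u)\,d\tilde B^i_u,\qquad V_i^{(\epsilon)}(z)\triangleq V_i(x+\epsilon z).
\]
The parametric family $\{V_i^{(\epsilon)}\}$ is uniformly bounded in $\cac_b^\infty$ and uniformly elliptic with the constants $\Lambda_1,\Lambda_2$ from \eqref{eq:hyp-elliptic}. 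Writing $\tilde p^{(\epsilon)}$ for the density of $\tilde X^{(\epsilon)}_1$, the scaling yields $p(t,x,y)=\epsilon^{-N}\tilde p^{(\epsilon)}\bigl((y-x)/\epsilon\bigr)$, so that \eqref{local lower estimate-elliptic} reduces to the uniform-in-$\epsilon$ statement
\[
\tilde p^{(\epsilon)}(z) \;\geq\; C_1 \qquad \text{for all }|z|\leq C_2\text{ and } \epsilon \leq \tau^H.
\]

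The second ingredient is the identification of a Gaussian limit. As $\epsilon\to 0$ the coefficients freeze to $V_i(x)$, and $\tilde X^{(\epsilon)}\to W$ in rough-path topology, where $W_s\triangleq\sum_i V_i(x)\tilde B^i_s$. By uniform ellipticity, $W_1$ is a non-degenerate centered Gaussian vector in $\R^N$ with covariance $V(x)V^*(x)$, whose Lebesgue density $\rho$ satisfies $\rho(z)\geq 2C_1>0$ for all $|z|\leq C_2$. The plan is then to upgrade this pointwise bound to the uniform statement $\tilde p^{(\epsilon)}(z)\geq C_1$ by showing that $\tilde p^{(\epsilon)}\to\rho$ uniformly on the compact set $\{|z|\leq C_2\}$.

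Uniform convergence of densities is a classical consequence of Watanabe's smoothing, provided two uniform Malliavin estimates hold:
\[
\textup{(i)}\ \sup_{\epsilon}\|\tilde X^{(\epsilon)}_1\|_{k,p}<\infty\ \text{ for each }k,p;\qquad \textup{(ii)}\ \sup_{\epsilon}\mathbb{E}\bigl[(\det \gamma_{\tilde X^{(\epsilon)}_1})^{-p}\bigr]<\infty\ \text{ for each }p.
\]
Both are obtained by revisiting the rough-path Malliavin calculus of \cite{CHLT15,BNOT16} in parametric form, using that $\{V^{(\epsilon)}\}$ is uniformly bounded and uniformly elliptic. Granted (i) and (ii), the Watanabe integration-by-parts formula
\[
\tilde p^{(\epsilon)}(z)=\mathbb{E}\bigl[\mathbf{1}_{\tilde X^{(\epsilon)}_1\geq z}\,H^{(\epsilon)}(z)\bigr]
\]
produces a weight $H^{(\epsilon)}(z)$ with $L^p$-norm controlled uniformly in $(\epsilon,z)$ on compacts, hence equicontinuity of $\{\tilde p^{(\epsilon)}\}$. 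Combined with the $L^p$-convergence $\tilde X^{(\epsilon)}_1\to W_1$, this yields $\tilde p^{(\epsilon)}\to\rho$ uniformly on $\{|z|\leq C_2\}$, and therefore $\tilde p^{(\epsilon)}(z)\geq C_1$ on that set for $\epsilon$ (equivalently $t$) small enough.

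The hard part is the uniform negative-moment bound (ii) in the rough regime $H<1/2$. The standard approach (Norris-type lemma, inversion of the Jacobian of the flow, rough-path estimates on iterated integrals) a priori produces constants that depend on H\"older and $p$-variation norms of the Jacobian associated with $V^{(\epsilon)}$, and it requires careful bookkeeping to keep these constants uniform as $\epsilon\to 0$. The uniform ellipticity of $V^{(\epsilon)}(V^{(\epsilon)})^*$ and the Cameron--Martin analysis of Section~\ref{sec:prelim} --- in particular Lemmas~\ref{lem: surjectivity of K*} and~\ref{lem: continuous embedding when H<1/2} together with Theorem~\ref{thm: local comparison in elliptic case}, which ensures that the Girsanov shifts required to perturb $\tilde X^{(\epsilon)}_1$ into any $z\in\{|z|\leq C_2\}$ have $\bar{\mathcal H}$-norm bounded uniformly in $\epsilon$ --- are the essential inputs that make this uniform control possible.
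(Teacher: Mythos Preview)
Your approach is valid but genuinely different from the paper's. The paper does not freeze coefficients at $x$; instead it performs a Cameron--Martin shift. Concretely, after the same scaling $\varepsilon=t^H$, the paper picks $h\in\Pi_{x,y}$ with $\|h\|_{\bar{\mathcal H}}\le 2\varepsilon$ (this is where Theorem~\ref{thm: local comparison in elliptic case} enters), applies Girsanov to write $p(t,x,y)\ge C\varepsilon^{-N}\mathbb{E}[\delta_0(X^\varepsilon(h))e^{-I(h/\varepsilon)}]$ with $X^\varepsilon(h)=(\Phi_1(x;\varepsilon B+h)-\Phi_1(x;h))/\varepsilon$, and then shows that the limit $X(h)$ is a non-degenerate Gaussian whose covariance is the deterministic Malliavin matrix $\Gamma_{\Phi_1(x;h)}$ (Lemma~\ref{lem: uniform nondegeneracy of Malliavin matrix}). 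The exponential weight $e^{-I(h/\varepsilon)}$ is handled by a nice orthogonal decomposition of $I(h/\varepsilon)$ in the first chaos. Your route linearizes at the starting point rather than along a joining path; it is more elementary for the purely local statement because it bypasses the Cameron--Martin geometry and the distance comparison entirely. The paper's route, in exchange, is the one that scales naturally toward global or Varadhan-type bounds, since the displacement $y-x$ is absorbed into the deterministic shift $h$ rather than into the argument of the density.

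Your final paragraph, however, misidentifies the ``hard part'' and drags in machinery you do not need. In the uniformly elliptic regime the negative moments of $\det\gamma_{\tilde X^{(\epsilon)}_1}$ do not require a Norris lemma: writing $\gamma_{\tilde X^{(\epsilon)}_1}=J_1\int_0^1 J_s^{-1}V^{(\epsilon)}(V^{(\epsilon)})^*(J_s^{-1})^*ds\,J_1^*$ (in the $H\le 1/2$ case via Lemma~\ref{lem: continuous embedding when H<1/2}, and via the interpolation inequality for $H>1/2$), the lower bound follows directly from \eqref{eq:hyp-elliptic} and uniform moment bounds on $J,J^{-1}$, exactly as in the proof of Lemma~\ref{lem: uniform nondegeneracy of Malliavin matrix}. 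Likewise, your invocation of Theorem~\ref{thm: local comparison in elliptic case} and ``Girsanov shifts required to perturb $\tilde X^{(\epsilon)}_1$ into any $z$'' is inconsistent with your own argument, which uses no Girsanov shift at all; those ingredients belong to the paper's proof, not yours. What your strategy actually needs, and what you should state clearly, is that all Malliavin estimates and the rate of convergence $\tilde X^{(\epsilon)}_1\to W_1$ are uniform not only in $\epsilon$ but also in $x\in\mathbb{R}^N$, which follows because the family $\{V^{(\epsilon)}\}_{\epsilon,x}$ is bounded in $\cac_b^\infty$ and satisfies \eqref{eq:hyp-elliptic} with constants independent of $(\epsilon,x)$.
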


\begin{rem}
From Theorem \ref{thm: local comparison in elliptic case}, we know that $|B_d(x,t^H)|\asymp t^{NH}$ when $t$ is small. Therefore, Theorem \ref{thm: local lower estimate} becomes the following result, which is consistent with the intuition that the density $p(t,x,y)$ of the solution to equation \eqref{sde-intro} should behave like the Gaussian kernel:
\[
p(t,x,y)\asymp\frac{C_{1}}{t^{NH}}\exp\left(-\frac{C_{2}|y-x|^{2}}{t^{2H}}\right).
\]
\end{rem}

The main idea behind the proof of Theorem \ref{thm: local lower estimate in elliptic case} is to translate the small time  estimate in \eqref{local lower estimate-elliptic} into a large deviation estimate. To this aim, we will first recall some preliminary notions taken from \cite{BOZ15}. By a slight abuse of notation, for any sample path $w$ of $B$ we will call  $w\mapsto\Phi_t(x;w)$ the solution map of the SDE (\ref{sde-intro}). From the scaling invariance of fractional Brownian motion, it is not hard to see that \begin{align}\label{scaling property of equation driven by fBm}
\Phi_{t}(x;B)\stackrel{{\rm law}}{=}\Phi_{1}(x;\varepsilon B),
\end{align} where $\varepsilon\triangleq t^H$. Therefore, since the random variable $\Phi_{t}(x;B)$ is nondegenerate under our standing assumption~\eqref{eq:hyp-elliptic}, the density $p(t,x,y)$ can be written as
\begin{equation}\label{b3}
p(t,x,y)=\mathbb{E}\left[\delta_{y}\left(\Phi_{1}(x;\varepsilon B)\right)\right].
\end{equation}

Starting from expression \eqref{b3}, we now label a proposition which gives a lower bound on $p(t,x,y)$ in terms of some conveniently chosen shifts on the Wiener space.
\begin{prop}\label{th: summary of 4}
In this proposition, $\Phi_t$ stands for  the solution map of equation \eqref{sde-intro}.  The vector fields $\{V_1,\ldots,V_d\}$ are supposed to satisfy the uniform elliptic assumption~\eqref{eq:hyp-elliptic}. Then the following holds true.

\vspace{2mm}
\noindent (i) Let $\Phi_t$ be the solution map of equation \eqref{sde-intro}, $h\in\bar{\mathcal{H}}$, and let
\begin{align}\label{def approx X(h)}
X^{\varepsilon}(h)\triangleq\frac{\Phi_{1}(x;\varepsilon B+h)-\Phi_{1}(x;h)}{\varepsilon}.
\end{align}
Then  $X^\ep(h)$ converges in $\mathbb{D}^\infty$ to X(h) uniformly in $h\in\bar{\mathcal{H}}$ with $\|h\|_{\bar{\mathcal{H}}}\leq M$ (for any $M>0$). 
Moreover $X(h)$ is a $\R^N$-valued centered Gaussian random variable whose covariance matrix will be specified below.\\
(ii) Let $\ep>0$ and consider $x,y\in\R^N$ such that $d(x,y)\leq\ep$, where $d(\cdot,\cdot)$ is the distance considered in Theorem \ref{thm: local comparison in elliptic case}.  Choose $h\in\Pi_{x,y}$ so that 
\begin{equation}\label{eq:local-cdt-on-h}
\|h\|_{\bar{\mathcal{H}}}\leq d(x,y)+\varepsilon\leq2\varepsilon.
\end{equation}
Then we have
\begin{align}\label{eq: CM theorem}
\mathbb{E}\left[\delta_{y}\left(\Phi_{1}(x;\varepsilon B)\right)\right]  \geq C\varepsilon^{-N}\cdot\mathbb{E}\left[\delta_{0}\left(X^\ep(h)\right){\rm e}^{-I\left(\frac{h}{\varepsilon}\right)}\right].
\end{align}
\end{prop}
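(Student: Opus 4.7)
My strategy splits according to the two parts of the proposition. First I will identify the Gaussian limit $X(h)$ and prove $\mathbb{D}^\infty$ convergence of $X^\varepsilon(h)$; then I will combine this with a Cameron-Martin shift and a Dirac-scaling identity to derive \eqref{eq: CM theorem}.

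For part (i), I identify $X(h)$ as $Y_1$, where $Y$ solves the linearized RDE
\begin{equation*}
dY_t = \sum_{i=1}^d V_i(z_t) \, dB_t^i + \sum_{i=1}^d DV_i(z_t) Y_t \, dh_t^i, \qquad Y_0 = 0,
\end{equation*}
with $z_t \triangleq \Phi_t(x;h)$. Since $z$, $DV_i(z)$ and $h$ are deterministic, $Y_1$ is a linear functional of the rough-path lift of $B$, hence a centered Gaussian vector whose covariance matrix is read off from the resolvent of the linear equation. The $\mathbb{D}^\infty$ convergence $X^\varepsilon(h) \to X(h)$ then follows from a first-order Taylor expansion of the Itô map in the direction of $B$, combined with uniform-in-$h$ bounds on the Malliavin derivatives of $\Phi_1(x; \varepsilon B + h)$. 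Those Malliavin derivatives are themselves solutions of rough differential equations, for which quantitative Malliavin-Sobolev estimates are available in the rough-paths literature (\cite{CHLT15}); the uniformity in $\|h\|_{\bar{\mathcal{H}}} \leq M$ is obtained by controlling the $q$-variation norm of $h$ via Proposition \ref{prop: variational embedding}.

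For part (ii), the core is a Cameron-Martin change of measure applied to the shift $B \mapsto B + h/\varepsilon$, which is admissible since $h/\varepsilon \in \bar{\mathcal{H}}$. Formally this gives
\begin{equation*}
\mathbb{E}\bigl[\delta_y(\Phi_1(x; \varepsilon B))\bigr] = \mathbb{E}\bigl[\delta_y(\Phi_1(x; \varepsilon B + h)) \, e^{-I(h/\varepsilon)}\bigr],
\end{equation*}
where $I(h/\varepsilon)$ collects the Radon-Nikodym exponent. Since $h \in \Pi_{x,y}$ yields $\Phi_1(x;h) = y$, the defining relation \eqref{def approx X(h)} gives $\Phi_1(x; \varepsilon B + h) = y + \varepsilon X^\varepsilon(h)$. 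Combined with the scaling $\delta_y(y + \varepsilon u) = \varepsilon^{-N} \delta_0(u)$, this produces the prefactor $\varepsilon^{-N}$ in front of $\mathbb{E}[\delta_0(X^\varepsilon(h)) e^{-I(h/\varepsilon)}]$, which is exactly \eqref{eq: CM theorem}.

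The main obstacle is rigorously interpreting $\mathbb{E}[\delta_y(\cdot)]$ on both sides uniformly in $\varepsilon$, since $\delta_y$ is only a distribution. This requires uniform Malliavin non-degeneracy of $\Phi_1(x; \varepsilon B)$ and of $X^\varepsilon(h)$, which is inherited from the ellipticity condition \eqref{eq:hyp-elliptic} together with the $\mathbb{D}^\infty$-convergence established in part (i) (the Gaussian limit $X(h)$ being non-degenerate under \eqref{eq:hyp-elliptic}). Once this is in place, one approximates $\delta_y$ by smooth mollifiers $\rho_\eta$, applies the Cameron-Martin identity and the Dirac scaling to each $\rho_\eta(\cdot - y)$, and passes to the limit $\eta \to 0$ through Malliavin integration by parts; the constant $C$ and the inequality in \eqref{eq: CM theorem} reflect the mild localization that is needed in this limiting argument.
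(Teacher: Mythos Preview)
Your treatment of part (i) is essentially what the paper does (it simply cites \cite{BOZ15}), and your identification of $X(h)$ via the linearized equation is correct and in line with that reference.

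For part (ii), however, there is a genuine gap in how you locate the constant $C$. The Cameron--Martin identity you write is missing the deterministic factor: the correct equality is
\[
\mathbb{E}\bigl[\delta_y(\Phi_1(x;\varepsilon B))\bigr]
= e^{-\frac{\|h\|_{\bar{\mathcal{H}}}^{2}}{2\varepsilon^{2}}}\,
\mathbb{E}\bigl[\delta_y(\Phi_1(x;\varepsilon B+h))\,e^{-I(h/\varepsilon)}\bigr],
\]
and it is precisely this prefactor $e^{-\|h\|_{\bar{\mathcal{H}}}^{2}/(2\varepsilon^{2})}$ that is bounded below by $e^{-2}$ using the hypothesis \eqref{eq:local-cdt-on-h}, i.e.\ $\|h\|_{\bar{\mathcal{H}}}\leq 2\varepsilon$. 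That is the sole origin of the inequality and of the constant $C$ in \eqref{eq: CM theorem}. Your proposal never uses \eqref{eq:local-cdt-on-h} and instead attributes $C$ to ``mild localization'' in the mollifier limit; but the passage from smooth approximations to $\delta_y$ via Malliavin integration by parts is an \emph{identity} and introduces no constant. After this correction, the Dirac scaling $\delta_y(y+\varepsilon u)=\varepsilon^{-N}\delta_0(u)$ gives \eqref{eq: CM theorem} exactly as you describe, so the remainder of your argument is fine.
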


\begin{proof}
The first statement is proved in \cite{BOZ15}. For the second statement, according to the Cameron-Martin theorem, we have 
\begin{equation*}
\mathbb{E}\left[\delta_{y}\left(\Phi_{1}(x;\varepsilon B)\right)\right]  
=
{\rm e}^{-\frac{\|h\|_{\bar{\mathcal{H}}}^{2}}{2\varepsilon^{2}}}
\mathbb{E}\left[\delta_{y}\left(\Phi_{1}(x;\varepsilon B+h)\right){\rm e}^{-I\left(\frac{h}{\varepsilon}\right)}\right],
\end{equation*}
where we have identified $\bar{\mathcal{H}}$ with $\mathcal{H}$ through $\mathcal{R}$ and recall that $I:\mathcal{H}\rightarrow\mathcal{C}_1$ is the Wiener integral operator introduced in Section \ref{sec: prel.}.
Therefore, thanks to inequality \eqref{eq:local-cdt-on-h}, we get
\begin{equation*}
\mathbb{E}\left[\delta_{y}\left(\Phi_{1}(x;\varepsilon B)\right)\right]  
\geq C\cdot\mathbb{E}\left[\delta_{y}\left(\Phi_{1}(x;\varepsilon B+h)\right){\rm e}^{-I\left(\frac{h}{\varepsilon}\right)}\right].
\end{equation*}
 In addition we have chosen $h\in\Pi_{x,y}$, which means that $\Phi_1(x;h)=y$. Thanks to the scaling property of the Dirac delta function in $\R^N$, we get
 \begin{align*}
p(t,x,y)=\mathbb{E}\left[\delta_{y}\left(\Phi_{1}(x;\varepsilon B)\right)\right]  
&\ge 
C\varepsilon^{-N}\cdot\mathbb{E}\left[\delta_{0}\left(\frac{\Phi_{1}(x;\varepsilon B+h)-\Phi_{1}(x;h)}{\varepsilon}\right){\rm e}^{-I\left(\frac{h}{\varepsilon}\right)}\right].
\end{align*}
Our claim  \eqref{eq: CM theorem} thus follows from the definition \eqref{def approx X(h)} of $X^{\ep}(h)$.
\end{proof}

Let us now describe the covariance matrix of $X(h)$ introduced in Proposition \ref{th: summary of 4}. For this, we recall again that $\Phi$ is the application defined on $\bar{\mathcal{H}}$ by the deterministic It\^{o} map associated to \eqref{sde-intro}. The Jacobian of $\Phi_t(\cdot\,;h): \R^N\to\R^N$ is denoted by $J(\cdot\,;h)$. 

First, it is easy to see that the deterministic Malliavin differential $D^l\Phi_t:=\langle D\Phi_t(x,h), l\rangle_{\bar{\mathcal{H}}}$ of $\Phi$ satisfies
\begin{align}\label{eq for Malliavin diff}
D^l\Phi_t=\sum_{i=1}^d\int_0^t\partial V_i(\Phi_s(x;h))D^l\Phi_sdh^i_s+\sum_{i=1}^d\int_0^tV_i(\Phi_s(x;h))dl^i_s, \ \ \ \mathrm{for\ all}\  l\in\bar{\mathcal{H}},
\end{align}
where $D$ is the Malliavin derivative operator.  Comparing \eqref{eq for Malliavin diff} to the equation satisfied by $J(x;h)$, it is standard from ODE theory that 
\begin{equation}\label{eq:derivative-Phi-with-Jacobian}
\langle D\Phi_{t}(x;h),l\rangle_{\bar{\mathcal{H}}}=J_{t}(x;h)\cdot\int_{0}^{t}J_{s}^{-1}(x;h)\cdot V(\Phi_{s}(x;h))dl_{s}.
\end{equation} 
According to the pairing (\ref{eq: H-barH pairing}), when viewed as an $\mathcal{H}$-valued functional, we have 
\begin{align}\label{eq: expression Malliavin derivative}(D\Phi_{t}^{i}(x;h))_{s}=\left(J_{t}(x;h)J_{s}^{-1}(x;h)V(\Phi_{s}(x;h))\right)^{i}\mathbf{1}_{[0,t]}(s),\ \ \ 1\leq i\leq N.
\end{align}
Next, observe that the Malliavin differential of $X_t(h):=\lim_{\varepsilon\downarrow 0}(\Phi(x;\varepsilon B+h)-\Phi_t(x;h))/\varepsilon$ satisfies the same equation as \eqref{eq for Malliavin diff}, which is deterministic. This implies that $X_t(h)$ is a Gaussian random variable and  the $N\times N$ covariance matrix of $X_t(h)$ admits the following representation 
\begin{align}\label{Malliavin matrix X(h)}
\mathrm{Cov}(X_t(h))\equiv \Gamma_{\Phi_t(x;h)}=\langle D\Phi_t(x;h), D\Phi_t(x;h)\rangle_\mathcal{H}.
\end{align}


With \eqref{Malliavin matrix X(h)} in hand, a crucial point for proving Theorem \ref{thm: local lower estimate in elliptic case} is the fact that $\Gamma_{\Phi_1(x;h)}$ is uniformly non-degenerate with respect to all $h$. This is the content of the following result which is another special feature of ellipticity that fails in the hypoelliptic case. Its proof is an adaptation of the argument in \cite{BOZ15} to the deterministic context.

\begin{lem}\label{lem: uniform nondegeneracy of Malliavin matrix}
Let $M>0$ be a localizing constant. Consider the Malliavin covariance matrix $\Gamma_{\Phi_1(x;h)}$ defined by \eqref{Malliavin matrix X(h)}. Under the uniform ellipticity assumption (\ref{eq:hyp-elliptic}), there exist $C_1,C_2>0$ depending only on $H,M$ and the vector fields, such that 
\begin{align}\label{uniform bound for Malliavin matrix}C_{1}\leq\det \Gamma_{\Phi_{1}(x;h)}\leq C_{2}
\end{align}for all $x\in\mathbb{R}^N$ and $h\in\bar{\mathcal{H}}$ with $\|h\|_{\bar{\mathcal{H}}}\leq M$.
\end{lem}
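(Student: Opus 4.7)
The natural strategy is to factor out the Jacobian. From~\eqref{eq: expression Malliavin derivative} one has the decomposition
\[
\Gamma_{\Phi_1(x;h)} \;=\; J_1(x;h)\,\Sigma(x;h)\,J_1^{\ast}(x;h),
\qquad
\Sigma^{ab}(x;h)\;:=\;\sum_{k=1}^{d}\bigl\langle(J_{\cdot}^{-1}V_{k}(\Phi_{\cdot}))^a,(J_{\cdot}^{-1}V_{k}(\Phi_{\cdot}))^b\bigr\rangle_{\ch},
\]
so that $\det\Gamma=(\det J_{1})^{2}\det\Sigma$. Standard Gronwall-type estimates for Young/rough equations, applied under the assumption $\|h\|_{\bch}\leq M$ and using Proposition~\ref{prop: variational embedding}, imply that $\Phi$, $J$, $J^{-1}$ are bounded in an appropriate H\"older or $q$-variation ball uniformly in $x$ and $h$. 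In particular $\det J_{1}$ is bounded above and below by positive constants depending only on $H,M,V$, and it remains to establish the two-sided control on $\det\Sigma$.

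The upper bound on $\det\Sigma$ is routine. Writing $f^{\xi}_{k}(s):=V_{k}^{\ast}(\Phi_{s})J_{s}^{-\ast}\xi$ for $\xi\in\R^{N}$, one has $\xi^{\ast}\Sigma\xi=\|f^{\xi}\|_{\ch^{d}}^{2}$, and this is uniformly controlled via the continuous embeddings $L^{1/H}\hookrightarrow\ch$ (when $H>1/2$) and $C^{\gamma}\hookrightarrow\ch$ for some $\gamma>1/2-H$ (when $1/4<H\leq 1/2$), combined with the uniform H\"older regularity of $\Phi$ and $J$.

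The lower bound is the heart of the argument, and I would proceed by duality. First, the uniform ellipticity assumption~\eqref{eq:hyp-elliptic} yields the pointwise bound
\[
\sum_{k=1}^{d}|f^{\xi}_{k}(s)|^{2}\;=\;\zeta_{s}^{\ast}(VV^{\ast})(\Phi_{s})\zeta_{s}\;\geq\;\Lambda_{1}|\zeta_{s}|^{2}\;\geq\;c_{0}>0,\qquad\zeta_{s}:=J_{s}^{-\ast}\xi,
\]
uniformly over unit vectors $\xi$, $s\in[0,1]$, $x\in\R^{N}$, and $\|h\|_{\bch}\leq M$. To upgrade this pointwise bound into a lower bound on the $\ch^{d}$-norm, I would choose the test path $l\in\bch^{d}$ specified by $\dot{l}_{s}=V^{\ast}(VV^{\ast})^{-1}(\Phi_{s})\zeta_{s}$, so that
\[
{}_{\ch^{d}}\lla f^{\xi},l\rra_{\bch^{d}}\;=\;\int_{0}^{1}f^{\xi}_{s}\cdot dl_{s}\;=\;\int_{0}^{1}\zeta_{s}^{\ast}(VV^{\ast})(VV^{\ast})^{-1}\zeta_{s}\,ds\;=\;\int_{0}^{1}|\zeta_{s}|^{2}ds\;\geq\;c_{0},
\]
where the pairing formula~\eqref{eq: H-barH pairing} applies because $f^{\xi}\in C^{H^{-}}$. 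Cauchy--Schwarz in the $\ch$--$\bch$ pairing then gives $\|f^{\xi}\|_{\ch^{d}}\geq c_{0}/\|l\|_{\bch^{d}}$, and the desired uniform lower bound on $\det\Sigma$ follows provided $\|l\|_{\bch^{d}}$ is controlled from above uniformly in $\xi$, $x$, and $h$.

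The main obstacle is precisely this bound on $\|l\|_{\bch^{d}}$ in the regime $H>1/2$, where $\bch\subsetneq W_{0}^{1,2}$ and Lemma~\ref{lem: continuous embedding when H<1/2} is not available. For $H\leq 1/2$, $\dot{l}$ is uniformly bounded by construction and Lemma~\ref{lem: continuous embedding when H<1/2} gives $\|l\|_{\bch}\leq C_{H}\|l\|_{W^{1,2}}$ immediately. For $H>1/2$, I would invert~\eqref{eq: analytic expression of K} to solve $l=K\varphi$ explicitly, obtaining
\[
\varphi(t)\;=\;C_{H}\,t^{H-\tfrac12}\,D_{0^{+}}^{H-\tfrac12}\bigl(s^{\tfrac12-H}\dot{l}(s)\bigr)(t),
\]
and then verify $\varphi\in L^{2}([0,1])$ with a uniform bound using the integral representation~\eqref{eq: formula for fractional derivatives} and the H\"older regularity of $\dot{l}$ (inherited from that of $\Phi_{s}$ and $J_{s}$). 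This yields $\|l\|_{\bch^{d}}=\|\varphi\|_{L^{2}}\leq C_{H,V,M}$ and completes the proof of~\eqref{uniform bound for Malliavin matrix}.
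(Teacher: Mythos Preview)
Your argument is correct and takes a genuinely different route from the paper, especially for the lower bound when $H>1/2$. The paper does not factor out the Jacobian and instead attacks $z^{*}\Gamma_{\Phi_{1}(x;h)}z$ directly: for $H>1/2$ it writes this quadratic form via the kernel $|t-s|^{2H-2}$ and invokes the Baudoin--Hairer interpolation inequality \cite[Lemma~4.4]{BH07}, which lower-bounds $\int_{[0,1]^{2}}\langle\xi_{s},\xi_{t}\rangle|t-s|^{2H-2}\,ds\,dt$ in terms of $\inf_{t}|\xi_{t}|^{2}$ and $\|\xi\|_{\gamma}$; for $H\leq 1/2$ it simply applies the embedding $\mathcal{H}\hookrightarrow L^{2}$ from Lemma~\ref{lem: continuous embedding when H<1/2} to get $\|f^{\xi}\|_{\mathcal{H}}^{2}\geq C_{H}\|f^{\xi}\|_{L^{2}}^{2}$, after which pointwise ellipticity suffices. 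Your duality approach replaces the interpolation black box by an explicit test element $l\in\bar{\mathcal{H}}$ and a fractional-calculus estimate on $\|l\|_{\bar{\mathcal{H}}}$ that closely mirrors the computation already carried out in the proof of Theorem~\ref{thm: local comparison in elliptic case}; this has the merit of being unified over all $H\in(1/4,1)$ and entirely self-contained within the paper's toolbox. The paper's route is shorter for $H>1/2$ but imports the interpolation inequality, and for $H\leq 1/2$ your duality is exactly the dual statement (via Lemma~\ref{lem: continuous embedding when H<1/2}) of what the paper does, hence slightly less direct. One small point worth tightening: your invocation of the pairing formula~\eqref{eq: H-barH pairing} for $f^{\xi}\in C^{H^{-}}$ when $H\leq 1/2$ requires the H\"older (not merely $q$-variation) regularity of $\Phi$ and $J$, which does follow from $\bar{\mathcal{H}}=I_{0^{+}}^{H+1/2}(L^{2})\hookrightarrow C^{H}$ but is not explicitly stated in Proposition~\ref{prop: variational embedding}.
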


\begin{proof}

We consider the cases of $H>1/2$ and $H\leq 1/2$ separately. We only study the lower bound of $\Gamma_{\Phi_1(x;h)}$ since the upper bound is standard from pathwise estimates by \eqref{eq: expression Malliavin derivative} and~\eqref{Malliavin matrix X(h)}, plus the fact that $\|h\|_{\mathcal{H}}\leq M$.

\vspace{2mm}
\noindent
\emph{(i) Proof of the lower bound when  $H>1/2$.}
According to relation \eqref{Malliavin matrix X(h)} and the expression for the inner product in $\mathcal{H}$ given by \cite[equation (5.6)]{Nualart06}, we have \[
\Gamma_{\Phi_{1}(x;h)}=C_{H}\sum_{\alpha=1}^{d}\int_{[0,1]^{2}}J_{1}J_{s}^{-1}V_{\alpha}(\Phi_{s})V_{\alpha}^{*}(\Phi_{t})(J_{t}^{-1})^{*}J_{1}^{*}|t-s|^{2H-2}dsdt,
\]where we have omitted the dependence on $x$ and $h$ for $\Phi$ and $J$ inside the integral for notational simplicity. It follows that for any $z\in\mathbb{R}^N$, we have
\begin{align}\label{eq: inner product H>1/2}z^{*}\Gamma_{\Phi_{1}(x;h)}z=C_{H}\int_{[0,1]^{2}}\langle\xi_{s},\xi_{t}\rangle_{\mathbb{R}^{d}}|t-s|^{2H-2}dsdt,
\end{align}where
$\xi$ is the function in $\mathcal{H}$ defined by
\begin{align}\label{definition xi}\xi_{t}\triangleq V^{*}(\Phi_{t})(J_{t}^{-1})^{*}J_{1}^{*}z.\end{align}
According to an interpolation inequality proved by Baudoin-Hairer (cf. \cite[Proof of Lemma 4.4]{BH07}), given $\gamma>H-1/2$, we have 
\begin{equation}\label{eq: interpolation inequality}
\int_{[0,1]^{2}}\langle f_{s},f_{t}\rangle_{\mathbb{R}^{d}}|t-s|^{2H-2}dsdt\geq C_{\gamma}\frac{\left(\int_{0}^{1}v^{\gamma}(1-v)^{\gamma}|f_{v}|^{2}dv\right)^{2}}{\|f\|_{\gamma}^{2}}
\end{equation}for all $f\in C^\gamma([0,1];\mathbb{R}^d)$. 
Observe that, due to our uniform ellipticity assumption \eqref{eq:hyp-elliptic} and the non-degeneracy of $J_t$, we have
\begin{align}\label{lower bound xi}
\inf_{0\leq t\leq1}|\xi_{t}|^{2}\geq C_{H,V,M}|z|^{2}.
\end{align}
Furthermore, recall that $\Phi_t$  is driven by $h\in\bar{\mathcal{H}}$. We have also seen that $\bar{\mathcal{H}}\hookrightarrow C^H_0$ whenever $H>1/2$. Thus for $H-1/2<\gamma<H$, we get $\|\Phi_t\|_\gamma\leq C_{H,V}\|h\|_\gamma$; and the same inequality holds true for the Jacobian $J$ in \eqref{definition xi}. Therefore, going back to equation \eqref{definition xi} again, we have
\begin{align}\label{upper bound xi}\|\xi\|_{\gamma}^{2}\leq C_{H,V,M}\|h\|_{\bar{\mathcal{H}}}\,|z|^2\leq C_{H,V,M}|z|^{2},
\end{align}
where the last inequality stems from our assumption $\|h\|_{\bar{\mathcal{H}}}\leq M$. Therefore, taking $f_t=\xi_t$ in (\ref{eq: interpolation inequality}), plugging inequalities \eqref{lower bound xi} and \eqref{upper bound xi} and recalling inequality \eqref{eq: inner product H>1/2}, we conclude that \[
z^{*}\Gamma_{\Phi_{1}(x;h)}z\geq C_{H,V,M}|z|^{2}
\]uniformly for $\|h\|_{\bar{\mathcal{H}}}\leq M$ and the result follows.

\vspace{2mm}
\noindent
\emph{(ii) Proof of the lower bound when  $H\leq1/2$.}
Recall again that \eqref{Malliavin matrix X(h)} yields
$$z^{*}\Gamma_{\Phi_{1}(x;h)}z  =\|z^{*}D\Phi_{1}(x;h)\|_{\mathcal{H}}^{2}.$$
Then owing to the continuous embedding $\mathcal{H}\subseteq L^2([0,1])$ proved in Lemma \ref{lem: continuous embedding when H<1/2}, and expression~\eqref{eq: expression Malliavin derivative} for $D\Phi_t$, we have for any $z\in\mathbb{R}^N$,
\begin{align*}
z^{*}\Gamma_{\Phi_{1}(x;h)}z  & \geq C_{H}\|z^{*}D\Phi_{1}(x;h)\|_{L^{2}([0,1])}^{2}\\
 & =C_{H}\int_{0}^{1}z^{*}J_{1}J_{t}^{-1}V(\Phi_{t})V^{*}(\Phi_{t})(J_{t}^{-1})^{*}J_{1}^{*}zdt.
 \end{align*}
We can  now invoke the uniform ellipticity assumption \eqref{eq:hyp-elliptic} and the non-degeneracy of $J_t$ in order to obtain
 \begin{align*}
z^{*}\Gamma_{\Phi_{1}(x;h)}z \geq C_{H,V,M}|z|^{2}
\end{align*}uniformly for $\|h\|_{\bar{\mathcal{H}}}\leq M$. Our claim \eqref{uniform bound for Malliavin matrix} now follows as in the case $H>1/2$.
\end{proof}

With the preliminary results of Proposition \ref{th: summary of 4} and Lemma \ref{lem: uniform nondegeneracy of Malliavin matrix} in hand,  we are now able to complete the proof of Theorem \ref{thm: local lower estimate in elliptic case}.

\begin{proof}[Proof of Theorem \ref{thm: local lower estimate in elliptic case}] 

Recall that $X^\ep(h)$ is defined by \eqref{def approx X(h)}. According to our preliminary bound (\ref{eq: CM theorem}), it remains to show that 
\begin{equation}\label{eq: uniform lower estimate for Y^epsilon}
\mathbb{E}\left[\delta_{0}\left(X^\varepsilon(h)\right){\rm e}^{-I\left(\frac{h}{\varepsilon}\right)}\right]\geq C_{H,V}
\end{equation}uniformly in $h$ for $\|h\|_{\bar{\mathcal{H}}}\leq 2\varepsilon$ when $\varepsilon$ is small enough. The proof of this fact consists of the following two steps:

\vspace{2mm}
\noindent (i)  Prove that $\mathbb{E}[\delta_{0}(X(h)){\rm e}^{-I\left({h}/{\varepsilon}\right)}]\geq C_{H,V}$ for all $\varepsilon>0$ and $h\in\bar{\mathcal{H}}$ with $\|h\|_{\bar{\mathcal{H}}}\leq 1$;
\\
(ii) Upper bound the difference \[
\mathbb{E}\left[\delta_{0}\left(X^\varepsilon(h)\right){\rm e}^{-I\left(\frac{h}{\varepsilon}\right)}\right]-\mathbb{E}\left[\delta_{0}(X(h)){\rm e}^{-I\left(\frac{h}{\varepsilon}\right)}\right],
\]and show that it is small uniformly in $h$ for $\|h\|_{\bar{\mathcal{H}}}\leq 2\varepsilon $ when $\varepsilon$ is small.
We now treat the above two parts separately.

\noindent \emph{Proof of item (i):}  Recall that the first chaos $\mathcal{C}_1$ has been defined in Section 2.1. then observe that the random variable $X(h)=(X^1(h),...,X^N(h))$ introduced in Proposition \ref{th: summary of 4} sits in $\mathcal{C}_1$. We decompose the Wiener integral $I(h/\ep)$ as  $$I\left(h/\varepsilon\right)=G_{1}^{\varepsilon}+G_{2}^{\varepsilon},$$ where $G^\varepsilon_1$ and $G^\ep_2$ satisfy
$$G_1^\ep\in\mathrm{Span}\{X^i(h); 1\leq i\leq N\},\quad G^\ep_2\in \mathrm{Span}\{X^i(h); 1\leq i\leq N\}^\bot $$
where the orthogonal complement is considered in $\mathcal{C}_1$. With this decomposition in hand, we get
\begin{align*}
  \mathbb{E}\left[\delta_{0}(X(h)){\rm e}^{-I\left(\frac{h}{\varepsilon}\right)}\right]
 =\mathbb{E}\left[\delta_{0}(X(h)){\rm e}^{-G_{1}^{\varepsilon}}\right]\cdot\mathbb{E}\left[{\rm e}^{-G_{2}^{\varepsilon}}\right].
\end{align*}
Furthermore, $\mathbb{E}[\mathrm{e}^{G}]\geq 1$ for any centered Gaussian random variable $G$. Thus
\begin{align}\label{mid step-lower bound density}
  \mathbb{E}\left[\delta_{0}(X(h)){\rm e}^{-I\left(\frac{h}{\varepsilon}\right)}\right]
 \geq\mathbb{E}\left[\delta_{0}(X(h)){\rm e}^{-G_{1}^{\varepsilon}}\right].
\end{align} 
Next we approximate $\delta_0$ above by a sequence of function $\{\psi_n; n\geq 1\}$ compactly supported in $B(0,1/n)\subset\R^N$. Taking limits in the right hand-side of \eqref{mid step-lower bound density} and recalling that $G^\ep_1\in\mathrm{Span}\{X^i(h); 1\leq i\leq N\}$, we get
 \begin{align*}
  \mathbb{E}\left[\delta_{0}(X(h)){\rm e}^{-I\left(\frac{h}{\varepsilon}\right)}\right] \geq\mathbb{E}[\delta_{0}(X(h))].
\end{align*} 
We now resort to the fact that $X(h)$ is a Gaussian random variable with covariance matrix $\Gamma_{\Phi_1(x;h)}$ by \eqref{Malliavin matrix X(h)}, which satisfies relation \eqref{uniform bound for Malliavin matrix}. This yields
 \begin{align*}
  \mathbb{E}\left[\delta_{0}(X(h)){\rm e}^{-I\left(\frac{h}{\varepsilon}\right)}\right] \geq \frac{1}{(2\pi)^{\frac{N}{2}}\sqrt{\det \Gamma_{\Phi_1(x;h)}}}\geq C_{H,V},
\end{align*} 
uniformly for $\|h\|_{\bar{\mathcal{H}}}\leq1$. This ends the proof of item (i).

\medskip
\noindent\emph{Proof of item (ii):} By using the integration by parts formula in Malliavin's calculus (see e.g., \cite[Proposition 2.1.4]{Nualart06}, we have
$$\mathbb{E}[\delta_0(X(h))\mathrm{e}^{-I(h/\varepsilon)}]=\mathbb{E}\left[1_{\{X(h)\geq0\}}H(X(h), I({h}/{\varepsilon}))\right],$$ 
where $X(h)\geq 0$ is interpreted component-wise, and  $H(X(h), I({h}/{\varepsilon}))$ is a random variable which can be expressed explicitly in terms of the Malliavin derivatives of $I(h/\varepsilon)$, $X(h)$ and the inverse Malliavin covariance matrix $M_{X(h)}$ of $X(h)$. Similarly, we have
$$\mathbb{E}[\delta_0(X^\varepsilon(h))\mathrm{e}^{-I(h/\varepsilon)}]=\mathbb{E}\left[1_{\{X^\varepsilon(h)\geq0\}}H(X^\varepsilon(h), I({h}/{\varepsilon}))\right].$$ 
Therefore, 
\begin{align}\label{item ii breakdown}
&\left|\mathbb{E}[\delta_0(X(h))\mathrm{e}^{-I(h/\varepsilon)}]
-\mathbb{E}[\delta_0(X^\varepsilon(h))\mathrm{e}^{-I(h/\varepsilon)}]\right|\\
\leq&\left|\mathbb{E}\left[\left(\1_{\{X^\varepsilon(h)\geq0\}}-\1_{\{X(h)\geq0]\}}\right)H(X(h), I(h/\varepsilon))\right]\right|\nonumber\\
&+\left|\mathbb{E}\left[\1_{\{X^\varepsilon(h)\geq0\}}\left(H(X^\varepsilon(h),I(h/\varepsilon))-H(X(h),I(h/\varepsilon))\right)\right]\right|.\nonumber
\end{align}
Note that since $\|h\|_{\bar{\mathcal{H}}}\leq 2\varepsilon$ the random variable $H(X(h), I(h/\varepsilon))$ has bounded $p$-th moment (uniform in $\varepsilon$) for all $p\geq1$. It is thus clear from Proposition \ref{th: summary of 4}-(i) that the first term in the right-hand side of \eqref{item ii breakdown} can be made small when $\varepsilon$ is small.

As for the second term in the right-hand side of \eqref{item ii breakdown}, first note from standard argument (indeed, similar to the argument in the proof of Lemma \ref{lem: uniform nondegeneracy of Malliavin matrix}), one can show that $\det M_{X^\varepsilon(h)}$ has negative moments of all orders uniformly for all $\varepsilon\in (0,1)$ and bounded $h\in\bar{\mathcal{H}}$. Together with the convergence in Proposition \ref{th: summary of 4}-(i), we can show that 
\begin{align}\label{convergence Malliavin matrix}
\det M_{X^{\varepsilon}(h)}^{-1}\stackrel{L^{p}}{\longrightarrow}\det M_{\Phi_{1}(x;h)}^{-1},\qquad {\rm as}\ \varepsilon\rightarrow0 ,
\end{align}uniformly for $\|h\|_{\bar{\mathcal{H}}}\leq 1$ for each $p\geq1.$ Now recall that  $H(X(h), I({h}/{\varepsilon}))$ is a random variable which can be expressed explicitly in terms of the Malliavin derivatives of $I(h/\varepsilon)$, $X(h)$ and the inverse Malliavin covariance matrix $M_{X(h)}$ of $X(h)$. The convergence in \eqref{convergence Malliavin matrix} and Proposition \ref{th: summary of 4}-(i) is sufficient to conclude that the second term in the right-hand side of \eqref{item ii breakdown} can be made small when $\varepsilon$ is small. Therefore, the assertion of item (ii) holds.

Once item (i) and (ii) are proved, it is easy to obtain \eqref{eq: uniform lower estimate for Y^epsilon} and the details are omitted. This finishes te proof of Theorem \ref{thm: local lower estimate in elliptic case}. 
\end{proof}

\bigskip
We conclude our discussion by a remark regarding SDE with a drift.
\begin{rem}
One can also consider the SDE in \eqref{sde-intro} but with a smooth and bounded drift
\begin{align}\label{sde-drift}
Z_t=x+\int_0^tV_0(Z_s)ds+\sum_{i=1}^d \int_0^t V_i(Z_s)dB^i_s,\quad t\in[0,1].
\end{align}
It turns our the control distance of the system \eqref{sde-drift} (in terms of large deviation etc.) is the same as the one without a drift; that is, the same as being defined in \eqref{def: distance}. Hence, the corresponding local lower bound for the density function of $Z_t$ is the same as stated in Theorem \ref{eq:local-density-bound}. In order to see this, recall that  $\Phi_t(x;\cdot): \bar{\mathcal{H}}\to C[0,1]$ is the deterministic It\^{o} map associated to equation \eqref{sde-intro}. For each $\epsilon>0$ we further define $\Phi^\epsilon_t(x;\cdot)$ to be the solution map of the equation
\begin{align*}
Z_t^\epsilon=x+\epsilon^{\frac{1}{H}}\int_0^tV_0(Z_s^\epsilon)ds+\sum_{i=1}^d\int_0^tV_i(Z_s^\epsilon)dB^i_s,\quad t\in[0,1].
\end{align*}
That is, $Z^\epsilon_t=\Phi^\epsilon_t(x; B)$. Similar to \eqref{scaling property of equation driven by fBm}, we have for $\epsilon=t^H$,
\begin{align*}
Z_t=\Phi_{t}^1(x;B)\stackrel{{\rm law}}{=}\Phi_{1}^\epsilon(x;\varepsilon B).
\end{align*}
Now we proceed as in the proof of Theorem \ref{eq:local-density-bound}, and denote by $p(t,x,z)$ the density function of $Z_t$. Equation \eqref{b3} becomes
$$p(t,x,z)=\mathbb{E}\left[\delta_{z}\left(\Phi_{1}^\epsilon(x;\varepsilon B)\right)\right]=\mathbb{E}\left[\delta_{z}\left(\Phi_1(x;\epsilon B)+(\Phi_{1}^\epsilon(x;\varepsilon B)-\Phi_1(x;\epsilon B))\right)\right].$$
\end{rem}
As a result, if we still pick $h\in \Pi_{x,z}$ as before (that is, $\Phi_1(x,h)=z$), the expectation on the right hand-side of \eqref{eq: CM theorem} becomes
$$\mathbb{E}\left[\delta_{0}\left(X^\ep(h)+\frac{\Phi_1^\epsilon(x;\varepsilon B+h)-\Phi_1(x,\varepsilon B+h)}{\epsilon}\right){\rm e}^{-I\left(\frac{h}{\varepsilon}\right)}\right].$$
The observation is that rough differential equations are Lipschitz continuous with respect to the vector fields. Hence  the extra term
$$\frac{\Phi_1^\epsilon(x;\varepsilon B+h)-\Phi_1(x,\varepsilon B+h)}{\epsilon}$$
is of order $\epsilon^{\frac{1}{H}-1}$, and can be considered negligible since $0<H<1$. Therefore, all the previous argument goes through as if there was no drift. We leave it to the enterprising readers to fill in the details.

\end{document}